\numberwithin{equation}{section}
\newtheorem{theorem}{Theorem}[section]
\newtheorem{proposition}[theorem]{Proposition}
\newtheorem{lemma}[theorem]{Lemma}
\newtheorem{remark}[theorem]{Remark}
\begin{document}
 \title{Construction of  Multi-Bubble Solutions for a System of Elliptic Equations arising in Rank Two Gauge Theory }
  \author{Hsin-Yuan Huang  }
  \date{\small\it  Department of Applied Mathematics, National Chiao-Tung University,\\ Hsinchu, Taiwan}
 \maketitle
 \begin{abstract}
We study the existence of multi-bubble solutions for the following skew-symmetric Chern--Simons system
\begin{equation}\label{e051}
\left\{
\begin{split}
&\Delta u_1+\frac{1}{\varepsilon^2}e^{u_2}(1-e^{u_1})=4\pi\sum_{i=1}^{2k}\delta_{p_{1,i}}\\
&\Delta u_2+\frac{1}{\varepsilon^2}e^{u_1}(1-e^{u_2})=4\pi\sum_{i=1}^{2k}\delta_{p_{2,i}}
\end{split}
\text{  in  }\quad \Omega\right.,
\end{equation}
where  $k\geq 1$ and $\Omega$ is a flat tours in $\mathbb{R}^2$. It  continues the joint work with Zhang\cite{HZ-2015}, where we obtained the necessary conditions for the existence of bubbling solutions of Liouville type.  Under nearly necessary conditions(see Theorem \ref{main-thm}), we show that there exist a  sequence of  solutions  $(u_{1,\varepsilon}, u_{2,\varepsilon})$ to \eqref{e051} such that $u_{1,\varepsilon}$ and $u_{2,\varepsilon}$ blow up simultaneously at $k$ points in $\Omega$ as $\varepsilon\to 0$.

 \end{abstract}

\section{Introduction }

Over  the past few decades,  various Chern--Simons models have been proposed in the study of  condensed matter physics and particle physics, including the relativistic Chern-Simons models of high temperature superconductivity \cite{JW,Dunne1}, Lozano-Marqu\'es-Moreno-Schaposnik model \cite{LMMS} of bosonic sector of  $\mathcal{N}=2$ supersymmetric  Chern-Simons-Higgs theory,   Gudnason model \cite{Gu1, Gu2} of $\mathcal{N}=2$ supersymmetric  Yang-Mills-Chern-Simons-Higgs theory, Aharony--Bergman--Jafferis--Maldacena model\cite{abjm} and so on. The self-dual solutions of these Chern-Simons models often can  be reduced to systems of elliptic partial differential equations with exponential nonlinearities. We refer the readers to the  \cite{Dunne1,yangbook,Tbook} for exhaustive bibliography.\par

For the Abelian Chern-Simons models, Hong, Kim and Pac\cite{HKP}, and Jackiw and Weinberg\cite{JW}  considered a  model with one Chern--Simons gauge field and constructed selfdual Abelian Chern--Simons--Higgs vortices which   describe anyonic solitons in 2+1 dimensions.   Speilman et al.\cite{SFEG} observed no parity breaking in an experiment with high temperature superconductivity. In  \cite{Hagen} and   \cite{Wil}, those authors   indicated the parity broken  may not  happen in the a field theory with even number of Chern-Simons gauge fields.  One of the simplest models of this kind is the $U(1)\times U(1)$ Chern-Simons model of two Higgs fields, where each of them coupled to one of two Chern--Simons fields.  In this paper, we will investigate  the relativistic self-dual  $U(1)\times U(1)$ Chern-Simons model proposed by Kim et al\cite{KLKLM}.    We give  a brief description on this model below.\par
Consider the Lagrangian action  density of the   $[U(1)]^2$ Chern-Simons model with only mutual Chern--Simons interaction  is  given in the form
\begin{equation}
 \mathcal{L}  =  -\frac{\varepsilon}{ 4}\epsilon^{\mu\nu\alpha}\left(A^{(1)}_\mu F^{(2)}_{\mu\nu}+A^{(2)}_\mu F^{(1)}_{\mu\nu}\right)+\sum\limits_{i=1}^2D_\mu\phi_i\overline{D^\mu\phi_i}-V(\phi_1, \phi_2), \label{a1}
\end{equation}
where $\varepsilon>0$ is a coupling parameter, $(A_\mu^{(1)})$ and $(A_\mu^{(2)})$   are two
associated Abelian gauge fields with the electromagnetic fields $F^{(i)}_{\mu\nu}=\partial_\mu A_\nu^{(i)}-\partial_\nu A_\mu^{(i)}$,
$\phi_1$  and $\phi_2$ are two Higgs scalar fields with the  covariant derivatives $D_\mu\phi_i=\partial_\mu\phi_i-\mathrm{i} A^{(i)}_\mu\phi_i$($ \mu=0,1, 2,\,i=1,2 $), and the Higgs potential $ V(\phi_1,\phi_2) $ is  taken as
 \begin{equation}
  V(\phi_1,\phi_2) =    \frac{1}{4\varepsilon^2}\left(|\phi_2|^2\left[|\phi_1|^2-1\right]^2+|\phi_1|^2\left[|\phi_2|^2-1\right]^2\right).
\end{equation}

Set $u_{i,\varepsilon}=\ln |\phi_i|^2$, and denote the zeros of $\phi_i$ by $\{p_{i,1}, \dots, p_{i,{N_i}}\}$, $i=1, 2$ as in \cite{JT}. After a BPS reduction \cite{Bo,PS}, we find that  $(u_{1,\varepsilon},u_{2,\varepsilon})$ satisfies
\begin{equation}\label{e001}
\left\{
\begin{split}
\Delta u_1+\frac{1}{\varepsilon^2}e^{u_2}(1-e^{u_1})=4\pi\sum_{i=1}^{N_1}\delta_{p_{1,i}}\\
\Delta u_2+\frac{1}{\varepsilon^2}e^{u_1}(1-e^{u_2})=4\pi\sum_{i=1}^{N_2}\delta_{p_{2,i}}
\end{split}
\text{  in  }\quad \Omega\right.,
\end{equation}
where $\delta_p$ is  the Dirac measure   at $p$.  See \cite{KLKLM,dz94,LPY} for the details of the derivation of \eqref{e001}. In physical literature, $\Omega$ here is usually refereed to  $\mathbb{R}^2$ or a flat tours  in $\mathbb{R}^2$. In this paper, we consider the case of flat tours.  We refer the readers to \cite{LPY,LP,CCL,HL1,HHL1,CY2014} and reference therein for the recent developments.\par

Set $u_1\equiv u_2$ and $\{p_i^1\}_{i=1}^{N_1}=\{p_j^2\}_{j=1}^{N_2}$,  then the system \eqref{e001} is reduced to the Abelian Chern--Simons equation with one Higgs particle   proposed by  Kim--Pac\cite{HKP} and Jackiw--Weinberg\cite{JW},
\begin{equation} \label{eq17}
 \Delta u+\frac{1}{\varepsilon^2}e^u(1-e^u)=4\pi\sum_{s=1}^{N}\delta_{p_s},
\end{equation}
 which has been extensively studied for  more than  twenty years. We refer the readers to\cite{SY,T,CY1,C,Dunne1,T1,CFL,choe,CKL,DEFM} and reference therein for more details.\par

We introduce the Green function $ G(x,p)$ to  eliminate the Dirac measure in \eqref{eq17}.
Here, the Green function $G(x,p)$ is a doubly periodic function on $\partial \Omega$ and satisfies
\begin{equation}
-\Delta G(x,p)=\delta_p-\frac{1}{|\Omega|},
\end{equation}
and $ |\Omega|$ is the measure of $\Omega$.
 Let $u_0(x)=-4\pi\sum_{j=1}^N G(x, p_j)$. With the transformation $u\to u+u_0$, \eqref{eq17} becomes
 \begin{equation} \label{eq18}
 \Delta u+\frac{1}{\varepsilon^2}e^{u+u_0}(1-e^{u+u_0})= \frac{4N\pi}{|\Omega|}.
\end{equation}
Choe and Kim\cite{CK}  showed that, there may have a sequence of solutions  to \eqref{eq18} satisfying the following:\\
{\it There is finite set  $\{x_{1,\varepsilon},\cdots, x_{l,\varepsilon}\}$, $x_{j,\varepsilon}\in \Omega$, $j=1,\cdots,l$, such that, as $\varepsilon\to 0$
\begin{equation}\label{eq19}
u_{\varepsilon}(x_{j,\varepsilon})+2\ln \frac{1}{\varepsilon}\to\infty,\,\,j=1,\cdots, l,
\end{equation}
and
\begin{equation}\label{eq20}
u_{\varepsilon} +2\ln \frac{1}{\varepsilon}\to-\infty \text{ uniformly on any compact subset of   }\Omega\setminus\{q_1,\cdots, q_l\},
\end{equation}
where $q_j=\lim_{\varepsilon\to 0}x_{j,\varepsilon}$. Furthermore,
\begin{equation}
\frac{1}{\varepsilon^2}e^{u_{\varepsilon}+u_0}(1-e^{u_{\varepsilon}+u_0})\to \sum_{j=1}^l M_j\delta_{q_j}\quad\text{ as }\quad\varepsilon\to 0,\, M_j\geq 8\pi.
\end{equation}}

Solutions of \eqref{eq18} satisfying \eqref{eq19} and \eqref{eq20} are called {\it bubbling solutions}  and $q_j$ is called the blow-up point of the bubbling solution.\par
 We can classify these blow-up points as follows:  After suitable rescaling at the blow-up point $q_j$,     the bubbling solution $u_{\varepsilon}$ to \eqref{eq18}  converges to either the entire solution of
\begin{equation}\label{eq41}
\Delta u+|x|^{2m}e^{u}(1-|x|^{2m}e^u)=0,
\end{equation}
or   the entire solution of
\begin{equation}\label{eq42}
\Delta u+|x|^{2m}e^{u}=0.
\end{equation}
Here, $m=0$ if $q_j$ is not a vortex point and $m=\#\{p_i: p_i=q_j\}$ if $q_j$ is a vortex point. The blow-up point is called Chern-Simons type if the limiting equation is \eqref{eq41} and is called  mean field type if the limiting equation is \eqref{eq42}.
The existence and non-existence of bubbling solutions of \eqref{eq18} have been studied in a series of work of Lin and Yan\cite{LY,LY2,LY3}.\par

For \eqref{e001},  we may use the following system to construct the bubbling solutions to \eqref{e001} which blow up at the same point $q.$
\begin{itemize}
\item[(i)]  \begin{equation}\label{eq36} \text{ Chern-Simons system:  }
\left\{
\begin{split}
\Delta u_1+ |x|^{m_2}e^{u_2}(1-|x|^{m_1}e^{u_1})=0\\
\Delta u_2+  |x|^{m_1}e^{u_1}(1-|x|^{m_2}e^{u_2})=0
\end{split}\quad
\text{  in  }\quad \mathbb{R}^2\right..
\end{equation}
\item[(ii)]
\begin{equation}\label{eq37} \text{ Liouville system:   }
\left\{
\begin{split}
\Delta u_1+ |x|^{m_2}e^{u_2} =0\\
\Delta u_2+ |x|^{m_1}e^{u_1} =0
\end{split}\quad
\text{  in  }\quad \mathbb{R}^2\right..
\end{equation}
\end{itemize}
Here, $m_1=\#\{p_{1,j}:\, p_{1,j}=q\}$ and $m_2=\#\{p_{2,j}:\, p_{2,j}=q\}$. For the case $(i)$, the blow-up point is called the Chern-Simons type; for the case $(ii)$, the blow-up point is called the Liouville type. \par

For the existence of the bubbling solutions of Chern-Simons type to \eqref{e001}, there are two known results. The first one is due to Lin and Yan\cite{LY}, in which they assume that $N_1=N_2$ and one of the vortex points $\{p_{1,i}\}_{i=1,\cdots, N_1}$ and $\{p_{1,i}\}_{i=1,\cdots, N_1}$ coincide, then they  constructed a Chern-Simons type bubbling solution blow up at that vortex point.

{\bf Theorem A. }\cite{LY} {\it
  Suppose that $N_1=N_2=N>4$ and $p_{1,1}=p_{2,1}=p_1$ and $p_{1,1}\neq p_{1,j}$, $p_{2,j},$ $j>1$. Then, there is an $\varepsilon_0>0$, such that for any $\varepsilon\in(0, \varepsilon_0)$, \eqref{e001} has a solution $(u_{1, \varepsilon}, u_{2, \varepsilon})$, satisfying
\begin{equation}
  u_{1, \varepsilon}-u_{2, \varepsilon}\to 4\pi \sum\limits_{j=2}^{N}G(p_{1,1}, p_{1,j})-4\pi \sum\limits_{j=2}^{N}G(p_{2,1}, p_{2,j})
 \end{equation}
  and
  \begin{equation}
   \frac{1}{\varepsilon^2}e^{ u_{i, \varepsilon} }\left(1-e^{ u_{j, \varepsilon} }\right)\to 4\pi N\delta_{p_1},\quad i\not=j,\quad i,j\in\{1,2\}
  \end{equation}
   as $\varepsilon\to 0$.
 }\par
   The idea in the proof of  Theorem A is using the ansatz
$$u_1-u_2\thickapprox 4\pi \sum\limits_{j=2}^{N}G(p_{1,1}, p_{1,j})-4\pi \sum\limits_{j=2}^{N}G(p_{2,1}, p_{2,j})$$ to reduce the system\eqref{e001} to a single equation.\par
In a joint work with Han and Lin\cite{HHL1}, we proved the existence of Chern-Simons type bubbling solutions when the blow-up point is not vortex point.\\
\\
{\bf Theorem B. }\cite{HHL1} {\it
  Suppose $(N_1-1)(N_2-1)>1$, and $q$  satisfies
  \begin{equation}
  D ( N_1u_{2, 0}+N_2u_{1, 0})(q)=0   \label{e4}
 \end{equation}
 and
 \begin{equation}deg(D(N_1 u_{2,0}+N_2u_{1,0}), q)\neq 0. \end{equation}
 Then  the system \eqref{e001} admits Chern-Simons type bubbling solutions $(u_{1, \varepsilon}, u_{2, \varepsilon})$ blowing up at $q$.}\\
 \par

Theorem A and B  only discussed the one blow-up case. However, the problem on the existence of multi-bubble solutions of \eqref{e001} is still open. The analysis of linearized system to \eqref{eq36} and \eqref{eq37} is not fully understood when $(m_1,m_2)\not=(0,0)$. Thus,   we only investigate the issue on the existence of the {\it bubbling solutions of Liouville type }to \eqref{e001} whose blow up points  are regular points. We give more precise description on this type bubbling solutions below. \par
Let $(u_{1,\varepsilon},u_{2,\varepsilon})$ be a sequence of solutions to \eqref{e001} blowing up at $\{q_1,\cdots, q_k\}$ and $q_i\not\in\{p_{1,i}, p_{2,j}\}_{1\leq i\leq N_1,\,1\leq  j\leq N_2}$. For a small constant $d>0$, we define the local mass of $(u_{1,\varepsilon},u_{2,\varepsilon})$ at $q_i$:  $$(m_{1,i,\varepsilon},m_{2,i,\varepsilon})=\left(\frac{1}{\varepsilon^2}\int_{B_d(q_i)}e^{u_{2,\varepsilon}}(1-e^{u_{1,\varepsilon}}), \frac{1}{\varepsilon^2}\int_{B_d(q_i)}e^{u_{1,\varepsilon}}(1-e^{u_{2,\varepsilon}}) \right).$$
We assume that  there exist
$\{x_{1,j,\varepsilon}\}_{j=1}^k$ and $\{x_{2,j,\varepsilon}\}_{j=1}^k$ such that
\begin{itemize}
\item[($a_1$)]  $u_{i,\varepsilon}(x_{i,j,\varepsilon})+2\ln \frac{1}{\varepsilon} \to +\infty $  as $\varepsilon\to 0$, $i=1,2$, $j=1,\cdots,k$.
\item[($a_2$)]   $u_{i,\varepsilon}(x)+2\ln \frac{1}{\varepsilon} \to -\infty $  as $\varepsilon\to 0$ uniformly on any compact set of $\Omega\setminus\{ q_1,\cdots,q_k\}$, where
$q_j=\lim_{\varepsilon\to 0}x_{1,j,\varepsilon}=\lim_{\varepsilon\to 0}x_{2,j,\varepsilon}$, $j=1,\cdots,k$.

\item[($a_3$)] $\beta_{j,\varepsilon}=\max\{ u_{1,\varepsilon}(x_{1,j,\varepsilon}),u_{2,\varepsilon}(x_{2,j,\varepsilon})  \}\to -\infty$ as  $\varepsilon\to 0$.
\item[($a_4$)]  $ |u_{1,j,\varepsilon}(x_{1,j,\varepsilon})-u_{2,\varepsilon}(x_{2,j,\varepsilon})|=O(1)$
\item[($a_5$)]   $|x_{i,j,\varepsilon}-q_j|<C\varepsilon e^{-\frac{1}{2}\beta_{j,\varepsilon}}$ for some constant $C>0$.
\end{itemize}
The solutions $(u_{1,\varepsilon},u_{2,\varepsilon})$ satisfying $(a_1)-(a_5)$ are called {\it fully bubbling solutions of Liouville type}.\par Let $\mu_{j,\varepsilon}=\varepsilon e^{-\frac{1}{2}\beta_{j,\varepsilon}}$ and assume $\max\{u_{1,\varepsilon}(x),u_{2,\varepsilon}(x)\}$ attains its maximum at $x_{j,\varepsilon}$ for $x$ near $q_j$. Formally,
$$(\tilde{u}_{1}(y),\tilde{u}_2(y))=(u_{1,\varepsilon}(\mu_{j,\varepsilon}y+x_{j,\varepsilon})-\beta_{j,\varepsilon},u_{2,\varepsilon}( \mu_{j,\varepsilon}y+x_{j,\varepsilon})- \beta_{j,\varepsilon})$$
converges to the entire solution $(U_{1,j},U_{2,j})$ of \eqref{eq37} with  $m_1=m_2=0$. Furthermore, the flux $(M_{1,j},M_{2,j})=(\int_{\mathbb{R}^2} e^{U_{2,j}},\int_{\mathbb{R}^2} e^{U_{1,j}} )$  satisfies
\begin{equation}\label{eq60}
\frac{1}{M_{1,j}}+\frac{1}{M_{2,j}}=\frac{1}{4\pi}.
\end{equation} Thus,
either $\min\{ M_{1,j}, M_{2,j}\}< 8\pi$ or $M_{1,j}=M_{2,j}=8\pi$.    The analysis of bubbling solutions of these two types has different phenomena. We refer the readers to \cite{ZL-JFA} for the related bubbling analysis for Liouville system.\par

In a joint work with Zhang\cite{HZ-2015}, we  obtained necessary conditions for the fully bubbling solutions of  Liouville type  to \eqref{e001}.  Before we state the main result in \cite{HZ-2015}, we introduce some notations.  Let ${\bf q}=(q_1,\cdots,q_k)$, $q_i\in\mathbb{R}^2$, $i=1,\cdots,k$,
$$
 G_1^*({\bf q})=\sum_{i=1}^k u_{0,1}(q_i)+8\pi \sum_{ 1\leq i<j\leq k}G(q_i,q_j)
\,\,\text{ and  }\,\,
 G_2^*({\bf q})=\sum_{i=1}^k u_{0,2}(q_i)+8\pi \sum_{ 1\leq i<j\leq k}G(q_i,q_j).
$$ Denote the function $f_{i,j,\textbf{q}}$($i=1,2,$, $j=1,\cdots,k$) as follows.
\begin{equation}
f_{i,j,\textbf{q}}= 8\pi(\gamma(y,q_j)-\gamma(q_j,q_j))+\sum_{l\not = j}(G(y,q_l)-G(q_j,q_l))+u_{0,i}(y)-u_{0,i}(q_j),
\end{equation}
 where $\gamma(y,q)$  is the regular part of $G(y,p)$. So, it is clear that
 $\frac{\partial G_1^*({\bf x})}{\partial x_{j,h}}=\frac{\partial f_{1,j,x}(x)}{\partial x_h}$
 and
 $\frac{\partial G_2^*({\bf x})}{\partial x_{j,h}}=\frac{\partial f_{2,j,x}(x)}{\partial x_h}$.
We define the quantity $\mathcal{D}^{(2)}({\bf q})$ as follows
\begin{equation}\label{e052}
\begin{split}
\mathcal{D}^{(2)}({\bf q})&=\lim_{\delta\to 0} \left( \sum_{j=1}^k  \frac{\rho_j}{\rho_1}
 \left(  \int_{\Omega_j\setminus B_{\delta}  } \frac{e^{f_{1,j,{\bf q}}}-1}{|x-q_{j }|^{4}}-  \int_{\mathbb{R}^2\setminus \Omega_j  } \frac{1}{|x-q_{j }|^{4}}  \right)\right.\\
 &\qquad +  \left.  \sum_{j=1}^k \frac{\rho_j^*}{\rho_1^*}\left( \int_{\Omega_j\setminus B_{\delta}  } \frac{e^{f_{2,j,{\bf q}}}-1}{|x-q_{j }|^{4}}-  \int_{\mathbb{R}^2\setminus \Omega_j  } \frac{1}{|x-q_{j}|^{4} }  \right)\right)
\end{split}
\end{equation}
where  $\{\Omega_j\}_{j=1,\cdots,k}$ is any open set with
\begin{itemize}
\item[(1)] $\Omega_i\cap \Omega_j=\emptyset$, $i\not =j$, $i,j\in\{1,\cdots,k\}$,
\item[(2)] $\cup_{j=1}^k \overline{\Omega_j}=\overline{\Omega}$,
\item[(3)] $B_{d_j}(q_{j})\subset\subset \Omega_j$, $j=1,\cdots,k$,
\end{itemize}
$\rho_j=e^{8\pi(\gamma(q_j,q_j)+\sum_{l\not =j}G(q_j,q_l)  ) +u_{0,1}(q_j)      }    ,$  and  $\rho^*_j=e^{8\pi(\gamma(q_j,q_j)+\sum_{l\not =j}G(q_j,q_l)  ) +u_{0,2}(q_j)      }   .$ \par
In particular, when $k=1$,
\begin{equation}
\mathcal{D}^{(2)}(q) =\lim_{\delta\to 0}
 \left(  \int_{\Omega \setminus B_{\delta}  } \frac{e^{ u_{0,1}(x)-u_{0,1}(q)  }-1}{|x-q|^{4}} + \frac{e^{u_{0,1}(x)-u_{0,1}(q) }-1}{|x-q|^{4}}-  \int_{\mathbb{R}^2\setminus \Omega } \frac{2}{|x-q |^{4} }  \right) .
\end{equation}\par
When  $\Omega$ is a rectangle, $k=1$ and $p_{i,j}=p$( $i,j=1,2$), it was shown \cite{linwang2010} that the Green function $G(x,p)$ has three critical points: two of them are saddle points whose corresponding $\mathcal{D}^{(2)}(q)>0$, and
the other is a maximum points whose $\mathcal{D}^{(2)}(q)<0$. We refer to \cite{linwang2010,linwang20171} for the related discussion on this kind quantity. \par

The necessary conditions for the existence of fully bubbling solutions of Liouville type are given as follows. \\
\\
{\bf Theorem C} \cite{HZ-2015} Suppose $(u_{1,\varepsilon}, u_{2,\varepsilon})$ is a sequence of fully bubbling solution of Liouville type to \eqref{e001} with $N_1=N_2=2k$ and blow up set is
$ \{q_1,\cdots, q_k \} \not \in \{p_{1,i}, p_{2,i}\}_{i=1,\cdots,2k}$.
 Then
\begin{itemize}
\item[$(1)$] $(u_{0,1}-u_{0,2})(q_i)=(u_{0,1}-u_{0,2})(q_j)$, $1\leq i,j\leq k.$
\item[$(2)$] ${\bf q}=(q_1,\cdots,q_k)$ is  a critical point of $G_1^*$ and $G_2^*$.
\item[$(3)$] $\mathcal{D}^{(2)}({\bf q})\leq 0$.
\end{itemize} \par

Naturally, we are led to the question whether the conditions obtained in Theorem C are    sufficient for the existence of such solutions.  In this paper, we will construct a sequence of bubbling solutions whose limiting local masses are $(8\pi,8\pi)$ and the sufficient condition are nearly necessary. Since we assume that $N_1=N_2=2k$, it is natural to use the entire solution $(U_1,U_2)$ of \eqref{eq37} with flux $(M_1,M_2)\approx (8\pi,8\pi)$ to construct the approximation solutions for \eqref{e001}. But $(M_1,M_2)$ must satisfy
\eqref{eq60}, it adds a difficulty in construction of good approximation solutions for multi-bubble case. With the assumption (A.1)(see below), we can choose $(U_1,U_2)$ with $(\int_{\mathbb{R}^2} e^{U_2}dx,\int_{\mathbb{R}^2} e^{U_1}dx )=(8\pi,8\pi)$ to construct the approximation solutions around each blow-up point and glue them together.   It is a crucial step when we apply the invertibility of linear operator in the contraction mapping argument.  \par

\begin{theorem}\label{main-thm}
Assume that
\begin{itemize}
\item[$($A.$1)$] $(u_{0,1}-u_{0,2})(q_i)=(u_{0,1}-u_{0,2})(q_j)$, $1\leq i,j\leq k.$
\item[$($A.$2)$] ${\bf q}$ be a critical point of $G_1^*$ and $G_2^*$
\item[$($A.$3)$] ${\bf q}$ be non-degenerate critical point of $G_1^*+G_2^*$
\item[$($A.$4)$] $\mathcal{D}^{(2)}({\bf q})<0$
\end{itemize}
Then for $\varepsilon>0$ small,
\eqref{e001} has a solution $(u_{1,\varepsilon},u_{2,\varepsilon})$ satisfies
\begin{equation}
\frac{1}{\varepsilon^2}e^{u_i}(1-e^{u_j})\to 8\pi \sum_{k=1}^N\delta_{q_l},\quad i\not= j, i,j=1,2,\,\,l=1,\cdots,k,\quad\text{as}\quad \varepsilon\to 0.
\end{equation}
\end{theorem}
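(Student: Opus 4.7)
The plan is a Lyapunov--Schmidt reduction built around a diagonal Liouville ansatz. I introduce $2k$ parameters: locations $\boldsymbol{\xi}=(\xi_1,\dots,\xi_k)$ near $\mathbf{q}$ and scales $\boldsymbol{\lambda}=(\lambda_1,\dots,\lambda_k)\in(0,\infty)^k$. Near each $\xi_j$ I glue in the symmetric Liouville bubble
\[
U_{\xi_j,\mu_j}(x)=\log\frac{8\mu_j^{-2}}{\bigl(1+\mu_j^{-2}|x-\xi_j|^2\bigr)^{2}},\qquad \mu_j=\lambda_j\varepsilon,
\]
which satisfies $\Delta U+e^{U}=0$ on $\mathbb{R}^2$ with $\int e^{U}=8\pi$; this is the unique choice compatible with \eqref{eq60} when $M_{1,j}=M_{2,j}=8\pi$, and the diagonal structure is admissible precisely because (A.1) forces the two heights $\beta_{1,j,\varepsilon}$ and $\beta_{2,j,\varepsilon}$ to coincide to leading order. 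Outside $B_{d/2}(\xi_j)$ I match to the Green function piece $-8\pi G(x,\xi_j)-u_{0,i}(x)$ and assemble global approximations $(W_{1,\varepsilon},W_{2,\varepsilon})$ on $\Omega$ that absorb the Dirac sources in \eqref{e051} and whose residual error in a weighted norm $\|\cdot\|_{**}$ is small.

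Next I linearise \eqref{e051} at $(W_{1,\varepsilon},W_{2,\varepsilon})$; after rescaling by $\mu_j$ near each bubble the leading operator becomes the Liouville system linearization
\[
\mathcal{L}_{0}(\phi_1,\phi_2)=\bigl(\Delta\phi_1+e^{U}\phi_2,\ \Delta\phi_2+e^{U}\phi_1\bigr),
\]
whose bounded kernel on $\mathbb{R}^2$ is $3$-dimensional at each point, spanned by a dilation mode $Z_j^{0}$ and two translation modes $Z_j^{1},Z_j^{2}$ acting diagonally in the two components. The perturbation coming from the Chern--Simons factor $-\frac{1}{\varepsilon^{2}}e^{W_{i,\varepsilon}+W_{j,\varepsilon}}$ is of order $\mu_j^{2}$ inside the bubble region and contributes a positive mass term outside, yielding invertibility transverse to $\operatorname{span}\{Z_j^{\ell}\}$. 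I would establish the a priori estimate $\|\phi\|_{*}\le C|\log\varepsilon|\,\|h\|_{**}$ via a blow-up/contradiction argument and then solve the projected nonlinear problem
\[
\mathcal{L}_{\varepsilon}(\phi)=R_\varepsilon+N_\varepsilon(\phi)+\sum_{j,\ell}c_{j}^{\ell}Z_{j}^{\ell}
\]
by the contraction mapping principle in a ball of radius $O(\varepsilon^{\sigma})$. This produces a solution $\phi_\varepsilon(\boldsymbol{\xi},\boldsymbol{\lambda})$ of the projected equation, smooth in its parameters, with Lagrange multipliers $c_j^\ell(\boldsymbol{\xi},\boldsymbol{\lambda})$.

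It remains to kill the multipliers by choosing $(\boldsymbol{\xi}_\varepsilon,\boldsymbol{\lambda}_\varepsilon)$. Testing against the translation modes $Z_j^{1},Z_j^{2}$ gives at leading order the gradient of $G_1^{*}+G_2^{*}$ at $\boldsymbol{\xi}$, so (A.2) and the non-degeneracy (A.3) produce $\boldsymbol{\xi}_\varepsilon\to\mathbf{q}$ via the implicit function theorem. Testing against the dilation mode $Z_j^{0}$ yields the more delicate scaling equation: since the local masses $(M_{1,j},M_{2,j})=(8\pi,8\pi)$ are already Pohozaev-compatible, the first-order contribution cancels, and the next-order expansion produces a relation of the form
\[
A_j\,\mathcal{D}^{(2)}(\mathbf{q})\,\mu_j^{2}+B_j\,\varepsilon^{2}\log\varepsilon+o(\varepsilon^{2}\log\varepsilon)=0
\]
with explicit positive constants $A_j,B_j$; condition (A.4) $\mathcal{D}^{(2)}(\mathbf{q})<0$ then guarantees a unique $\mu_{j,\varepsilon}\to 0$ solving it and completes the construction.

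The main obstacle is the rigorous identification of the leading term in the dilation equation with the regularised integral $\mathcal{D}^{(2)}(\mathbf{q})$ of \eqref{e052}. Because the naive mass balance already holds at first order, the reduction must be pushed one order beyond what is standard for a single Liouville equation, which forces a three-scale decomposition (bubble scale $\mu_j$, intermediate annuli, outer scale of order $1$) and careful accounting of the Green function tails at every blow-up point. The boundary divergences appearing at each scale have to cancel exactly against the counterterms $\int_{\mathbb{R}^{2}\setminus\Omega_{j}}|x-q_{j}|^{-4}$ built into the definition of $\mathcal{D}^{(2)}$; verifying these cancellations, and showing independence of the auxiliary partition $\{\Omega_j\}$, is the technically heaviest step and is exactly what links the construction to the sharp sign condition (A.4).
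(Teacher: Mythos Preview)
Your overall plan---diagonal Liouville ansatz, Lyapunov--Schmidt reduction, contraction for the projected problem, then solving for the parameters via the translation and dilation modes---matches the paper's strategy. Two points in your execution, however, are not right and would prevent the argument from closing.

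\textbf{(i) The heights are not independent.} You introduce $k$ free scales $\lambda_1,\dots,\lambda_k$ and a $3k$-dimensional approximate kernel. The paper insists, and for good reason, that only a single height parameter $\mu$ is admissible: one sets $\mu_1=\mu$ and $\mu_i=\sqrt{\rho_1/\rho_i}\,\mu$, where $\rho_i=e^{8\pi\gamma(x_i,x_i)+8\pi\sum_{j\neq i}G(x_i,x_j)+u_{0,1}(x_i)}$. This relation is what makes the global ansatz consistent: the computations of $\int_\Omega e^{\omega_\mu^*+u_{0,l}}$ and of the constants $c_{l,\mu}$ (Proposition~2.1) rely on $\rho_1\mu_1^2=\rho_i\mu_i^2$, and assumption (A.1) is used precisely to get the matching for the second component, $\rho_1^*\mu_1^2=\rho_i^*\mu_i^2(1+O(\mu^{-1}))$. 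With independent $\lambda_j$ the error of the approximate solution is one order worse and the contraction in your weighted space does not close. Correspondingly there is a single global dilation mode $(Y_{1,0},Y_{2,0})$, and the reduced system has dimension $2k+1$, not $3k$; there is one scalar equation for $\mu$, not $k$ equations each invoking the global quantity $\mathcal{D}^{(2)}(\mathbf q)$.

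\textbf{(ii) The dilation equation has no $\log\varepsilon$.} In the paper the balance obtained by testing against $(Y_{1,0},Y_{2,0})$ is
\[
\frac{8}{\mu^{3}}\,\mathcal{D}^{(2)}(\mathbf q)\;+\;B\,\varepsilon^{2}\mu\;=\;o(\mu^{-3}),\qquad B>0,
\]
the term $B\varepsilon^{2}\mu$ coming from the genuinely Chern--Simons quadratic piece $\frac{1}{\varepsilon^{2}}e^{U_{1,\mu}+u_{0,1}+U_{2,\mu}+u_{0,2}}$ (Lemma~3.5). Hence $\mu\sim\varepsilon^{-1/2}$, which is exactly the range $\mu\in[\beta_1\varepsilon^{-1/2},\beta_2\varepsilon^{-1/2}]$ assumed throughout the construction. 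Your proposed relation $A_j\mathcal{D}^{(2)}(\mathbf q)\mu_j^{2}+B_j\varepsilon^{2}\log\varepsilon=0$ predicts the wrong scale and, with the stated signs ($A_j,B_j>0$, $\mathcal{D}^{(2)}<0$, $\log\varepsilon<0$), has no solution at all. The logarithm you expect from the single-equation Chern--Simons problem does not appear here; what replaces it is the contribution of the product nonlinearity, and getting this term right is what makes (A.4) the correct sign condition.
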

When $k=1$, Theorem \ref{main-thm} is reduced to the following theorem.
\begin{theorem}\label{thm2}
Assume $q$ is a critical point of $u_{0,1}$ and $u_{0,2}$  and a non-dgenerate critical point of $u_{0,1}+u_{0,2}$,  and $\mathcal{D}^{(2)}(q)<0$, then
there exists a sequence of solution $(u_{1,\varepsilon},u_{2,\varepsilon})$
to \eqref{e001} blowing up at $q$.
\end{theorem}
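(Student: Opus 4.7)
The plan is a Lyapunov--Schmidt reduction around a single Liouville bubble centered near $q$. First I remove the Dirac sources via $u_i = \tilde u_i + u_{0,i}$, which converts \eqref{e001} into the smooth doubly periodic system
$$
\Delta \tilde u_i + \tfrac{1}{\varepsilon^2} e^{\tilde u_j + u_{0,j}}\bigl(1 - e^{\tilde u_i + u_{0,i}}\bigr) = \tfrac{8\pi}{|\Omega|}, \qquad i\neq j\in\{1,2\}.
$$
Since the expected local masses are $(8\pi,8\pi)$, the model profile is the Liouville bubble $V_\lambda(y) = \log \tfrac{8\lambda^2}{(1+\lambda^2|y|^2)^2}$, used identically in both components (this common-profile ansatz at the infinite-plane level is why condition (A.1) is vacuous when $k=1$). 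The approximate solution $(U_1^{\mathrm{app}}, U_2^{\mathrm{app}})$ depends on two free parameters, a center $\xi$ near $q$ and a scale $\lambda$ with $\lambda \to \infty$ and $\varepsilon\lambda \to 0$ (so that $\beta_\varepsilon = 2\log(\varepsilon\lambda) \to -\infty$), and is built by recentering $V_\lambda$ at $\xi$ and adding the harmonic correction $8\pi\bigl(\gamma(x,\xi) - \gamma(\xi,\xi)\bigr)$ to make the ansatz compatible with the doubly periodic background.

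Next I write $\tilde u_i = U_i^{\mathrm{app}} + \phi_i$ and set up the linear theory. In the bubble coordinate $y = (x-\xi)/\mu$ with $\mu = 1/\lambda$, the linearized operator converges to $\mathcal L(\phi_1, \phi_2) = \bigl(\Delta \phi_1 + e^V \phi_2,\, \Delta \phi_2 + e^V \phi_1\bigr)$. Diagonalizing in $\phi_\pm = (\phi_1 \pm \phi_2)/2$ shows $\mathcal L$ has a three-dimensional bounded kernel in the symmetric sector, spanned by the translation modes $\partial_{\xi_h} V_\lambda$ ($h=1,2$) and the dilation mode $\partial_\lambda V_\lambda$, and trivial kernel in the antisymmetric sector (where the operator is $\Delta - e^V$). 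I invert $\mathcal L$ modulo this kernel in a weighted $L^\infty$ norm of Chen--Lin type, and then use a contraction mapping to solve the projected nonlinear equation, producing a smooth map $(\xi,\lambda) \mapsto \phi(\xi, \lambda)$ that solves \eqref{e001} up to three Lagrange multipliers $c_0, c_1, c_2$ attached to the kernel directions.

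The last step is to kill these multipliers by choosing $(\xi, \lambda)$ appropriately. At leading order in $\varepsilon$, the two position equations reduce to
$$
\nabla_\xi\bigl(u_{0,1}(\xi) + u_{0,2}(\xi)\bigr) = o(1),
$$
which vanishes at $\xi = q$ by hypothesis (A.2) and, by the non-degeneracy hypothesis (A.3), can be solved via the implicit function theorem for a unique $\xi_\varepsilon \to q$. The scale equation, obtained by testing against $\partial_\lambda V_\lambda$, is more subtle: the leading Liouville contribution vanishes since $\int_{\mathbb{R}^2} e^V \partial_\lambda V = 0$ at the critical mass $8\pi$, so only the next order survives. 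After matching the inner Liouville expansion with the outer Green's-function expansion and accounting for the cubic Chern--Simons term $-\tfrac{1}{\varepsilon^2}e^{u_j+u_{0,j}}e^{u_i+u_{0,i}}$, the scale equation takes the schematic form
$$
c_*\,\log(\varepsilon\lambda) + \mathcal D^{(2)}(q) + o(1) = 0
$$
for an explicit positive constant $c_*$; hypothesis (A.4), $\mathcal D^{(2)}(q) < 0$, then makes it solvable with $\lambda \to \infty$ and $\varepsilon\lambda \to 0$, which is exactly the bubbling regime.

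The main difficulty lies in deriving this $\mathcal D^{(2)}$-level scale equation: the leading terms cancel, so the expansion must be pushed to the next order, keeping careful track of (i) the inner contribution from the Chern--Simons cubic term, (ii) the outer contribution from the Green's-function regular part, and (iii) the matching integrals near the shrinking bubble ball, so that these three pieces assemble into precisely the quantity defined in \eqref{e052}. A secondary technical point is the design of weighted function norms in which the linear theory is uniform as $\lambda \to \infty$, since the bubble $V_\lambda$ grows logarithmically at infinity and one must encode its decay in the weight while still controlling the Green's-function remainder after gluing the local bubble to the global periodic background.
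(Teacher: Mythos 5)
Your overall framework — Lyapunov–Schmidt reduction around a single Liouville bubble with a common profile in both components, diagonalization into $\phi_\pm = \phi_1 \pm \phi_2$ so that the linearized operator splits into a Liouville type piece and a coercive piece $\Delta - e^V$, and then testing against translation and dilation modes — matches the paper's approach (which treats the $k=1$ case as a specialization of the multi-bubble argument). The position equation $\nabla_\xi\bigl(u_{0,1}+u_{0,2}\bigr)(\xi) = o(1)$, solved by the implicit function theorem using the non-degeneracy hypothesis, is also consistent with what the paper does (through Lemma~\ref{lemma34}).

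However, there is a genuine gap in the scale equation. You write the reduced equation as $c_*\log(\varepsilon\lambda) + \mathcal D^{(2)}(q) + o(1)=0$ with $c_*>0$, and claim that $\mathcal D^{(2)}(q)<0$ makes it solvable with $\varepsilon\lambda\to 0$. But this is self-contradictory: $c_*>0$ and $\mathcal D^{(2)}(q)<0$ force $\log(\varepsilon\lambda)\to -\mathcal D^{(2)}(q)/c_* > 0$, i.e.\ $\varepsilon\lambda$ converges to a constant $>1$, which is the opposite of the bubbling regime and conflicts with the requirement $\beta_\varepsilon\to -\infty$. The actual balance (cf.\ Lemma~\ref{lemma35} and~\eqref{eq54}) is algebraic, not logarithmic. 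Schematically, after testing against the dilation direction $Y_0 = O(1/\mu)$ the equation reads
\begin{equation*}
\frac{8}{\mu^3}\left(\frac{1}{\rho_1}\mathcal D_1 + \frac{1}{\rho_1^*}\mathcal D_2\right) + B\,\varepsilon^2\mu = O\!\left(\frac{\delta^2}{\mu^3}\right)+O\!\left(\frac{\ln\mu}{\mu^{4-\alpha/2}}\right),
\end{equation*}
where $\mathcal D_1+\mathcal D_2$ assembles into $\mathcal D^{(2)}(q)$ as $\delta\to 0$ and $B>0$ comes from the Chern--Simons cubic term $\tfrac{1}{\varepsilon^2}e^{U_1+u_{0,1}+U_2+u_{0,2}}$ tested against $Y_0$. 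The two terms on the left are comparable precisely when $\varepsilon^2\mu^4 \sim 1$, i.e.\ $\mu\sim\beta/\sqrt\varepsilon$, and then the equation reduces to $\text{const}\cdot\mathcal D^{(2)}(q) + B\beta^4 \approx 0$, which is solvable for $\beta>0$ exactly because $\mathcal D^{(2)}(q)<0$ and $B>0$. This is how $\mathcal D^{(2)}(q)<0$ enters and why the regime $\mu\in[\beta_1/\sqrt\varepsilon,\beta_2/\sqrt\varepsilon]$ is the right one; your logarithmic version would not close the argument. A secondary omission: the hypothesis that $q$ is a critical point of \emph{both} $u_{0,1}$ and $u_{0,2}$ (not just of their sum) is needed for the approximation error estimate, since the linear-in-$(y-\xi)$ error carries the factors $Du_{0,1}(\xi)$ and $Du_{0,2}(\xi)$ separately; you invoke only the criticality of the sum.
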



It will be interesting to consider the Liouville type  bubbling  solutions with $N_1\not =N_2$. In this case, $\frac{k}{N_1}+\frac{k}{N_2}=1$ and $k\geq 2$. For instance, $k=2$ and $(N_1,N_2)=(3,6)$.   As suggested in the study of Liouville system\cite{ZL-JFA},  the quantity of $\mathcal{D}^{(2)}({\bf q})$ should be a slight different form and only depend on $\min\{\frac{4\pi N_1}{k},\frac{4\pi N_2}{k}\}$.  In \cite{ZL-JFA}, they only consider the simple bubble case, but in our case, it must be multi-bubble.  Furthermore, the invertibility of the corresponding linear operator could be another difficulty.  Since the height of each bubble is compatible, the dimension of the  kernel space to the  corresponding linear operator is $2k+1$. The kernel with respect to height is not a local condition which makes the analysis of linear operator difficult.   We will come back to this issue in the  future.\par

The rest of our paper is organized as follows. In  Sec. 2, we construct  approximate solutions of \eqref{e001}. In Sec. 3, we use the contraction mapping theorem to prove the existence of multi-bubble solutions. Appendix is devoted to   the invertibility of the linear operator $L_{\mu}$.

\section{The Approximation Solution}

In this section, we will construct an approximation solution $(U_{1,\mu}, U_{2,\mu})$(see \eqref{eq04} below) for \eqref{e001} with the assumptions in Theorem \ref{main-thm} and give the estimates on the approximation solution. This construction follows the method in \cite{LY2}. Without loss of generality, we assume that $|\Omega|=1$.\par

We consider the solution of mean field equation
\begin{equation}\label{eq62}
V_{x_i,\mu_i}(y)=\ln\frac{8\mu_i^2}{(1+\mu_i^2|y-x_i|^2)^2},\,\,x_i\in\mathbb{R}^2,\,\,\mu_i>0,
\end{equation}
which satisfies

\begin{equation}\left\{
\begin{split}
&\Delta V_{x_i,\mu_i}(y)+e^{V_{x_i,\mu_i}(y)}=0\text{     in   }\mathbb{R}^2,\\
&\int_{\mathbb{R}^2}e^{V_{x_i,\mu_i}(y)} dy=8\pi.
\end{split}
\right.
\end{equation}

In the construction of approximation solution, we always assume that $|x_i-q_i|<\frac{C_1}{\mu_i}$ for some large $C_1$
 and $$\mu_i\in \left[\frac{\beta_1}{\sqrt{\varepsilon}},\frac{\beta_2}{\sqrt{\varepsilon}}\right]\text{   for some constants   }\beta_1,\,\,\beta_2>0.$$

Define $\omega^*_{x_i,\mu_i}$ and $\omega^*_{x,\mu}$ as follows.
\begin{equation}
\omega^*_{x_i,\mu_i}(y)
=\left\{\begin{split}
&V_{x_i,\mu_i}(y)+8\pi\gamma(y,x_i)(1-\frac{1}{\theta_i}),\,\,y\in B_{d_i}(x_i),\\
&V_{0,\mu_i}(d_i)+8\pi(G(y,x_i)-\frac{1}{2\pi}\ln\frac{1}{d_i})(1-\frac{1}{\theta_i}),\,\,y\in \Omega\setminus B_{d}(x_i),
\end{split}
\right.
\end{equation}
where $\frac{1}{\theta_i}=\frac{1}{1+(\mu_id_i)^2}$ which makes $\omega^*_{x_i,\mu_i}\in C^1(\Omega)$, and
$$\omega^*_{x,\mu}=\sum_{i=1}^k \omega_{x_i,\mu_i}^*.$$
Set $\omega_{\mu}$
\begin{equation}\label{eq03}
\omega_{\mu}(y)=  \omega_{x ,\mu }^*(y)-\int_{\Omega }  \omega_{x ,\mu}^* .
\end{equation}

For $k\geq 2$, we construct the approximation solution $(U_{1,\mu},U_{2,\mu})$ as follows:

\begin{equation}\label{eq04}
(U_{1,\mu},U_{2,\mu})=(\omega_{\mu}+c_{1,\mu},\omega_{\mu}+c_{2,\mu}),
\end{equation}
where
$$c_{1,\mu}=\ln \frac{16 k\pi\varepsilon^2}{\int_{\Omega}e^{u_{0,1}+\omega_{\mu}}\left(1+\sqrt{1-32k\pi\varepsilon^2\frac{ \int_{\Omega}e^{\sum_{i=1}^2 (u_{0,i}+\omega_{\mu})}     }{\int_{\Omega}e^{u_{0,1}+\omega_{\mu}}\int_{\Omega}e^{u_{0,2}+\omega_{\mu}}}}\right)             }  $$
and
$$c_{2,\mu}=\ln \frac{16 k\pi\varepsilon^2}{\int_{\Omega}e^{u_{0,2}+\omega_{\mu}}\left(1+\sqrt{1-32k\pi\varepsilon^2\frac{ \int_{\Omega}e^{\sum_{i=1}^2 (u_{0,i}+\omega_{\mu})}     }{\int_{\Omega}e^{u_{0,1}+\omega_{\mu}}\int_{\Omega}e^{u_{0,2}+\omega_{\mu}}}}\right)             }.  $$

\begin{remark}

When $k=1$, the approximation solution for \eqref{e001} has a simpler form:
\begin{equation}\label{eq55}\begin{pmatrix}U_{1,\mu} \\ U_{2,\mu}\end{pmatrix}=  \begin{pmatrix}\omega^*_{x_1,\mu_1}-u_{0,1}(x_1) \\ \omega^*_{x_1,\mu_1}-u_{0,2}(x_1)\end{pmatrix}. \end{equation}

\end{remark}\par

In the construction of approximation,  the height $\mu_i$ of the bubble $\omega_{x_i,\mu_i}$ cannot be an independent variables.   So, we set
$$\mu_1=\mu,\,\,\mu_i=\sqrt{\frac{\rho_1}{\rho_i}} \mu,\,\,i=2,\cdots, k, \text{  for  some }\mu>0,    $$
where
$$\rho_i= e^{8\pi\gamma(x_i,x_i)   +8\pi\sum_{j\not =i}G(x_i,x_j)+u_{0,1}(x_i)},\,\,i=1,\cdots,k.$$
It is clear that
\begin{equation}\label{eq01}
\rho_1 \mu_1^2=\rho_i\mu_i^2,\,\,i=2,\cdots,k.
\end{equation}
Denote $\rho^*_i= e^{8\pi\gamma(x_i,x_i)   +8\pi\sum_{j\not =i}G(x_i,x_j)+u_{0,2}(x_i)},\,\,i=1,\cdots,k.$
By using $(u_{0,1}-u_{0,2}(x_i))-(u_{0,1}-u_{0,2}(q_i))=O(\frac{1}{\mu})$, we obatin
\begin{equation}\label{eq02}
\rho^*_1 \mu_1^2=\rho^*_i\mu_i^2\left(1+O(\frac{1}{\mu})\right),\,\,i=1,\cdots,k.
\end{equation}
In the rest of this section, we will some properties of the approximation solution $(U_{1,\mu}, U_{2,\mu})$.\par

For $l=1,2$, $j=1,\cdots,k$ and $y\in B_{d_j}(x_j)$, we have
\begin{equation}
\begin{split}
&u_{0,l}(y)-u_{0,l}(x_i)+8\pi\left((\gamma(y,x_i)-\gamma(x_i,x_i))(1-\frac{1}{\theta_i})+\sum_{m\neq j}(G(y,x_m)-G(x_j,x_m))(1-\frac{1}{\theta_m})\right)\\
=&\left<Du_{0,l}(x_i)+8\pi\sum_{m\not =j}DG(x_i,x_m),y-x_i \right>+O(|y-x_i|^2+\frac{1}{\mu^2}).
\end{split}
\end{equation}
By this and \eqref{eq62}, we find that
\begin{equation}\label{eq56}\begin{split}
&\int_{B_{d_i}(x_i)} e^{\omega_{\mu}^*+u_{0,1}}\\
=& \frac{8^{k-1}\rho_i\mu_i^2}{\mu_1^2\cdots\mu_k^2}\left(1+O(\frac{1}{\mu^2})\right)\\
&\times \int_{B_{d_i}(x_i)} e^{V_{x_i,\mu_i}+u_{0,i}(y)-u_{0,i}(x_j)+8\pi\left((\gamma(y,x_j)-\gamma(x_j,x_j)) +\sum_{m\not =j}(G(y,x_m)-G(x_j,x_m))\right)     }\\
=&\frac{8^{k-1}\rho_i\mu_i^2}{\mu_1^2\cdots\mu_k^2}\left(8\pi+\int_{B_{d_i}(x_i)}e^{V_{x_i,u_i}}|y-x_i|^2+O(\frac{1}{\mu^2})\right)=\frac{8^{k-1}\rho_i\mu_i^2}{\mu_1^2\cdots\mu_k^2}\left(8\pi+O(\frac{\ln\mu}{\mu^2})\right).
\end{split}
\end{equation}
Similarly,
\begin{equation}\label{eq57}
 \int_{B_{d_i}(x_i)} e^{\omega_{\mu}^*+u_{0,2}} =\frac{8^{k-1}\rho^*_i\mu_i^2}{\mu_1^2\cdots\mu_k^2}\left(8 \pi+O(\frac{\ln\mu}{\mu^2})\right).
 \end{equation}
By \eqref{eq01}, \eqref{eq02}, \eqref{eq56} and \eqref{eq57}, we obtain
\begin{equation}\label{a04}
\int_{\Omega} e^{\omega_{\mu}^*+u_{0,1}}= \sum_{i=1}^k \frac{8^{k-1}\rho_i\mu_i^2}{\mu_1^2\cdots\mu_k^2}\left(8\pi+O(\frac{\ln\mu}{\mu^2})\right)=\frac{8^{k-1}\rho_i\mu_i^2}{\mu_1^2\cdots\mu_k^2}\left(8k\pi+O(\frac{\ln\mu}{\mu^2})\right).
\end{equation}
Similarly,
\begin{equation}\label{a05}
\int_{\Omega}e^{\omega_{\mu}^*+u_{0,2}}=\frac{8^{k-1}\rho^*_i\mu_i^2}{\mu_1^2\cdots \mu_k^2}\left(8k\pi+O(\frac{1}{\mu})\right),
\end{equation}
where \eqref{eq02} is used.
The asymptotic behaviours of $\omega_{\mu }$ and $c_{i,\mu}$(i=1,2) will be discussed in the following proposition.
\begin{proposition}\label{P1}
(1) The function $\omega_{\mu }$  defined in \eqref{eq03} satisfies that, for $\delta>0$,
\begin{equation}\left\{
\begin{split}
&\max_{y\in B_{\delta}(x_i)}\omega_{\mu}(y)=2\ln\frac{1}{\varepsilon}+O(1),\\
&\omega_{\mu}(y)=O(1),\,y\in \Omega\setminus\cup_{i=1}^k B_{\delta}(x_i).
\end{split}\right.
\end{equation}
(2) The constants $c_{1,\mu}$ and $c_{2,\mu}$ satisfy
\begin{equation}
c_{i,\mu}=-3\ln \frac{1}{\varepsilon}+O(1),\,\,\,i=1,2.
\end{equation}

\end{proposition}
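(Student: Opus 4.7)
The plan is to prove (1) first by obtaining pointwise and average estimates for $\omega^*_{x,\mu}$ and then extracting $\omega_{\mu}$ as their difference. For the pointwise value at the center, the interior formula gives
$$\omega^*_{x_i,\mu_i}(x_i) = V_{x_i,\mu_i}(x_i) + 8\pi\gamma(x_i,x_i)(1-1/\theta_i) = \ln(8\mu_i^2)+O(1) = \ln(1/\varepsilon)+O(1)$$
since $\mu_i \in [\beta_1/\sqrt\varepsilon,\beta_2/\sqrt\varepsilon]$. For $j\ne i$ the exterior formula combined with the expansion $V_{0,\mu_j}(d_j) = -2\ln\mu_j + 4\ln(1/d_j) + O(1)$ (valid for large $\mu_j d_j$) exhibits cancellation with the corrector $-4\ln(1/d_j)(1-1/\theta_j)$ and gives $\omega^*_{x_j,\mu_j}(x_i) = -\ln(1/\varepsilon)+O(1)$. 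The same expansion shows $\omega^*_{x_i,\mu_i}(y)=-\ln(1/\varepsilon)+O(1)$ uniformly on compact subsets of $\Omega\setminus B_\delta(x_i)$. Summing in $i$ yields $\omega^*_{x,\mu}(x_i)=(2-k)\ln(1/\varepsilon)+O(1)$ and $\omega^*_{x,\mu}(y)=-k\ln(1/\varepsilon)+O(1)$ off the bubble set.

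Next I would compute $\int_\Omega\omega^*_{x_i,\mu_i}$ by splitting into $B_{d_i}(x_i)$ and its complement. The substitution $z=\mu_i(y-x_i)$ together with the elementary asymptotic $\int_{B_R}\ln(1+|z|^2)\,dz = 2\pi R^2\ln R-\pi R^2+O(1)$ gives $\int_{B_{d_i}(x_i)}V_{x_i,\mu_i} = -\pi d_i^2\ln(1/\varepsilon)+O(1)$. On the exterior, the normalization $\int_\Omega G(y,x_i)\,dy=0$ reduces the integral to $V_{0,\mu_i}(d_i)(1-\pi d_i^2)+O(1) = -(1-\pi d_i^2)\ln(1/\varepsilon)+O(1)$; adding the two pieces, the $\pi d_i^2\ln(1/\varepsilon)$ terms cancel and $\int_\Omega\omega^*_{x_i,\mu_i}=-\ln(1/\varepsilon)+O(1)$, so $\int_\Omega\omega^*_{x,\mu}=-k\ln(1/\varepsilon)+O(1)$. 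Subtracting from the pointwise estimates proves $\omega_\mu(x_i)=2\ln(1/\varepsilon)+O(1)$ and $\omega_\mu(y)=O(1)$ away from the bubbles, establishing part (1).

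For part (2), I would combine these estimates with \eqref{a04} and \eqref{a05} to obtain
$$A_l:=\int_\Omega e^{u_{0,l}+\omega_\mu}=e^{-\int_\Omega\omega^*_\mu}\!\!\int_\Omega e^{u_{0,l}+\omega^*_\mu}=O(\varepsilon^{-k})\cdot O(\varepsilon^{k-1})=O(\varepsilon^{-1}),\quad l=1,2.$$
An analogous rescaling in each ball, using $\int_{B_{d_i}(x_i)}e^{2V_{x_i,\mu_i}}=O(\mu_i^2)$ together with the cross-bubble factor $\prod_{j\ne i}\mu_j^{-4}$, produces $\int_\Omega e^{u_{0,1}+u_{0,2}+2\omega^*_\mu}=O(\varepsilon^{2k-3})$, hence $B:=\int_\Omega e^{u_{0,1}+u_{0,2}+2\omega_\mu}=O(\varepsilon^{-2k})\cdot O(\varepsilon^{2k-3})=O(\varepsilon^{-3})$. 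Consequently $32k\pi\varepsilon^2 B/(A_1A_2)=O(\varepsilon)$ and $1+\sqrt{1-O(\varepsilon)}=2+O(\varepsilon)$; substituting into the definition of $c_{1,\mu}$ gives
$$c_{1,\mu}=\ln\frac{8k\pi\varepsilon^2}{A_1}+O(\varepsilon)=-2\ln(1/\varepsilon)-\ln(1/\varepsilon)+O(1)=-3\ln(1/\varepsilon)+O(1),$$
and the computation for $c_{2,\mu}$ is identical.

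The principal bookkeeping difficulty lies in the exterior integral for $\omega^*_{x_i,\mu_i}$: one has to simultaneously track the leading $-2\ln\mu_i$ contained in $V_{0,\mu_i}(d_i)$, the exact cancellation against the $-4\ln(1/d_i)(1-1/\theta_i)$ corrector, and the mean-zero normalization of $G(\cdot,x_i)$, which is what converts the coefficient $(2-k)$ in the pointwise expansion into the target $+2\ln(1/\varepsilon)$ after averaging. Once this cancellation is lined up correctly, the remaining asymptotic estimates follow from the explicit form of the standard Liouville bubble \eqref{eq62} and elementary integration.
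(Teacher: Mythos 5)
Your proposal is correct and follows essentially the same route as the paper: for part (1) the paper delegates to Proposition 2.1 of \cite{LY2} while you supply the underlying pointwise-plus-average computation directly, and for part (2) your Taylor expansion of the square root together with the magnitude estimate $\int_\Omega e^{\omega_\mu+u_{0,l}}\sim\varepsilon^{-1}$ reproduces exactly the paper's identity $\ln\int_\Omega e^{\omega_\mu+u_{0,1}}=2\ln\mu+O(1)$ and its use of the $O(\mu^2)$ ratio bound \eqref{eq22}. The only cosmetic slip is that $\int_{B_R}\ln(1+|z|^2)\,dz=2\pi R^2\ln R-\pi R^2+O(\ln R)$ rather than $O(1)$, but after dividing by $\mu_i^2$ this changes nothing.
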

\begin{proof}
The estimate of $\omega_{\mu}$ is exactly the same as Proposition 2.1 in \cite{LY2}. So, we omit the proof. \par
By definition of $c_{1,\mu}$, we deduce that, for $\mu>0$ large,
\begin{equation}\label{eq21}
e^{c_{1,\mu}}=\frac{16k\pi \varepsilon^2}{ \int_{\Omega}e^{\omega_{\mu}+u_{0,1}}\left(2-16k\pi\varepsilon^2 \frac{ \int_{\Omega}e^{\sum_{i=1}^2 (\omega_{\mu}+u_{0,i})}}{\int_{\Omega}e^{\omega_{\mu}+u_{0,1}}\int_{\Omega}e^{ \omega_{\mu}+u_{0,2}}}+O((\varepsilon^2\frac{ \int_{\Omega}e^{\sum_{i=1}^2 (\omega_{\mu}+u_{0,i})}}{\int_{\Omega}e^{\omega_{\mu}+u_{0,1}}\int_{\Omega}e^{ \omega_{\mu}+u_{0,2}}})^2)   \right)   }
\end{equation}
On the other hand,
\begin{equation}\label{eq22}
\frac{ \int_{\Omega}e^{\sum_{i=1}^2 (\omega_{\mu}+u_{0,i})}}{\int_{\Omega}e^{\omega_{\mu}+u_{0,1}}\int_{\Omega}e^{ \omega_{\mu}+u_{0,2}}}=\frac{ \int_{\Omega}e^{\sum_{i=1}^2 (\omega_{\mu}^*+u_{0,i})}}{\int_{\Omega}e^{\omega^*_{\mu}+u_{0,1}}\int_{\Omega}e^{ \omega^*_{\mu}+u_{0,2}}}=O(\mu^2).
\end{equation}
So, by \eqref{eq21} and \eqref{eq22},

\begin{equation}
e^{c_{1,\mu}}=\frac{8k\pi \varepsilon^2}{ \int_{\Omega}e^{\omega_{\mu}+u_{0,1}}}\left(1+8k\pi\varepsilon^2 \frac{ \int_{\Omega}e^{\sum_{i=1}^2 (\omega_{\mu}+u_{0,i})}}{\int_{\Omega}e^{\omega_{\mu}+u_{0,1}}\int_{\Omega}e^{ \omega_{\mu}+u_{0,2}}}+O(\varepsilon^4\mu^4)  \right),
\end{equation}
and thus
\begin{equation}
\begin{split}
c_{1,\mu}=&\ln(8k\pi)+2\ln \varepsilon-\ln\int_{\Omega}e^{\omega_{\mu}+u_{0,1}}+8k\pi\varepsilon^2  \frac{ \int_{\Omega}e^{\sum_{i=1}^2 (\omega_{\mu}+u_{0,i})}}{\int_{\Omega}e^{\omega_{\mu}+u_{0,1}}\int_{\Omega}e^{ \omega_{\mu}+u_{0,2}}}+O(\varepsilon^4\mu^4)\\
=&2\ln\varepsilon-2\ln\mu+O(1)\\
=&-3\ln\frac{1}{\varepsilon}+O(1).
\end{split}
\end{equation}
Similarly, $c_{2,\mu}=-3\ln\frac{1}{\varepsilon}+O(1) $
\end{proof}

In the  end of this section we estimate
$$-\Delta U_{1,\mu}+\frac{1}{\varepsilon^2} e^{U_{2,\mu}+u_{0,2}}(e^{U_{1,\mu}+u_{0,1}}-1)+8k\pi$$
and
$$
-\Delta U_{2,\mu}+\frac{1}{\varepsilon^2} e^{U_{1,\mu}+u_{0,1}}(e^{U_{2,\mu}+u_{0,2}}-1)+8k\pi$$
which will be useful in the next section.

\begin{proposition}
Let $(U_{1,\mu},U_{2,\mu})$ be defined  in \eqref{eq04}. Then

\begin{equation}\label{eq05}
\begin{split}
&-\Delta U_{1,\mu}+\frac{1}{\varepsilon^2} e^{U_{2,\mu}+u_{0,2}}(e^{U_{1,\mu}+u_{0,1}}-1)+8k\pi\\
&=\sum_{i=1}^{k}\left(1_{B_{d_i(x_i)}}e^{V_{x_i,\mu_i}}(1-e^{f_{2,i,{\bf x},\mu}})+\frac{8\pi}{\theta_i}\right)\\
&+O\left(  \frac{\ln \mu}{\mu^2} +  \sum_{i=1}^k( \frac{1}{\mu^2}e^{V_{x_i,\mu_i}+u_{0,2}}+\frac{1}{\mu^4}e^{2V_{x_i,\mu_i}+u_{0,1}+u_{0,2}}   ) \right)
\end{split}
\end{equation}

and
\begin{equation}\label{eq06}
\begin{split}
&-\Delta U_{2,\mu}+\frac{1}{\varepsilon^2} e^{U_{1,\mu}+u_{0,1}}(e^{U_{2,\mu}+u_{0,2}}-1)+8k\pi\\
&=\sum_{i=1}^{k}\left(1_{B_{d_i(x_i)}}e^{V_{x_i,\mu_i}}(1-e^{f_{1,i,{\bf x},\mu}})+\frac{8\pi}{\theta_i}\right)\\
&+O\left(  \frac{\ln \mu}{\mu^2} + \sum_{i=1}^k( \frac{1}{\mu^2}e^{V_{x_i,\mu_i}+u_{0,1}}+\frac{1}{\mu^4}e^{2V_{x_i,\mu_i}+u_{0,1}+u_{0,2}}   ) \right)                      .
\end{split}
\end{equation}
where
\begin{equation}\label{a09}\begin{split}
 f_{1,i,{\bf x},\mu}(y)=&\textit{•}8\pi\left((\gamma(y,x_i)-\gamma(x_i,x_i))(1-\frac{1}{\theta_i})+\sum_{j\not= i}(G(y,x_i)-G(x_i,x_j))(1-\frac{1}{\theta_j})\right)\\
&+u_{0,1}(y)-u_{0,1}(x_i)
\end{split}
\end{equation}
\begin{equation}\label{a10}\begin{split}
 f_{2,i,{\bf x},\mu}(y)=&\textit{•}8\pi\left((\gamma(y,x_i)-\gamma(x_i,x_i))(1-\frac{1}{\theta_i})+\sum_{j\not= i}(G(y,x_i)-G(x_i,x_j))(1-\frac{1}{\theta_j})\right)\\
&+u_{0,2}(y)-u_{0,2}(x_i)
\end{split}
\end{equation}

\end{proposition}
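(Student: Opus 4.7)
The plan is to compute $-\Delta U_{l,\mu}$ exactly and then identify, near each blow-up point $x_i$, the nonlinear contribution $\varepsilon^{-2}e^{U_{j,\mu}+u_{0,j}}(e^{U_{l,\mu}+u_{0,l}}-1)$ as the negative Liouville bubble modulated by $f_{l,i,{\bf x},\mu}$, with everything else absorbed into the stated error. Since $U_{l,\mu}-\omega^*_{{\bf x},\mu}$ is a constant, $-\Delta U_{l,\mu}=-\Delta\omega^*_{{\bf x},\mu}$; using $-\Delta V_{x_i,\mu_i}=e^{V_{x_i,\mu_i}}$ together with $\Delta G(\cdot,x_i)=1-\delta_{x_i}$ and $\Delta\gamma(\cdot,x_i)=1$ (both of which use $|\Omega|=1$), a region-by-region computation inside and outside each $B_{d_i}(x_i)$ gives the exact pointwise identity
\begin{equation*}
-\Delta U_{l,\mu}(y)+8k\pi = \sum_{i=1}^{k}1_{B_{d_i}(x_i)}(y)\,e^{V_{x_i,\mu_i}(y)} + 8\pi\sum_{i=1}^{k}\frac{1}{\theta_i},\qquad l=1,2.
\end{equation*}

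Next, for $y\in B_{d_j}(x_j)$, the piecewise definition of $\omega^*_{x_i,\mu_i}$ and Taylor expansion of the regular parts around $x_j$ yield
\begin{equation*}
\omega_\mu(y)+u_{0,l}(y) = V_{x_j,\mu_j}(y)+f_{l,j,{\bf x},\mu}(y)+C_{l,j},
\end{equation*}
where $C_{l,j}$ collects $8\pi\gamma(x_j,x_j)(1-1/\theta_j)$, the $V_{0,\mu_i}(d_i)$ contributions from $i\neq j$, $u_{0,l}(x_j)$, and $-\int_\Omega\omega_\mu^*$. The crucial calibration $e^{c_{l,\mu}+C_{l,j}}/\varepsilon^2 = 1+O(\ln\mu/\mu^2)$, uniform in $j$ and $l=1,2$, is obtained by (i) inserting the splitting into \eqref{eq56}--\eqref{eq57} to get $e^{C_{l,j}+\int\omega_\mu^*} = 8^{k-1}\rho_j^{(l)}\mu_j^2/(\mu_1^2\cdots\mu_k^2)\cdot(1+O(\ln\mu/\mu^2))$ with $\rho_j^{(1)}=\rho_j$, $\rho_j^{(2)}=\rho_j^*$; (ii) using the defining quadratic relation of $(c_{1,\mu},c_{2,\mu})$ to get $e^{c_{l,\mu}}=8k\pi\varepsilon^2/\int_\Omega e^{\omega_\mu+u_{0,l}}+O(\varepsilon^4\mu^4)$; and (iii) invoking the balancings \eqref{eq01}--\eqref{eq02} so that the $\rho_j^{(l)}$-dependence cancels cleanly.

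Finally, splitting $e^{U_{1,\mu}+u_{0,1}}-1 = -1 + e^{U_{1,\mu}+u_{0,1}}$, the $-1$ contribution inside $B_{d_j}(x_j)$ becomes
\begin{equation*}
-\frac{1}{\varepsilon^2}e^{U_{2,\mu}+u_{0,2}} = -\frac{e^{c_{2,\mu}+C_{2,j}}}{\varepsilon^2}e^{V_{x_j,\mu_j}+f_{2,j,{\bf x},\mu}} = -e^{V_{x_j,\mu_j}}e^{f_{2,j,{\bf x},\mu}}\bigl(1+O(\tfrac{\ln\mu}{\mu^2})\bigr),
\end{equation*}
which combines with $1_{B_{d_j}}e^{V_{x_j,\mu_j}}$ from the Laplacian step to give the main term $1_{B_{d_j}}e^{V_{x_j,\mu_j}}(1-e^{f_{2,j,{\bf x},\mu}})$; the multiplicative calibration error is absorbed into $O(\mu^{-2}e^{V_{x_j,\mu_j}+u_{0,2}})$. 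The $e^{U_{1,\mu}+u_{0,1}}$ contribution yields the cross term $\varepsilon^{-2}e^{U_{1,\mu}+U_{2,\mu}+u_{0,1}+u_{0,2}} = O(\mu^{-4}e^{2V_{x_j,\mu_j}+u_{0,1}+u_{0,2}})$, using $\varepsilon^2\sim\mu^{-4}$. Outside $\bigcup_i B_{d_i}(x_i)$, $\varepsilon^{-2}e^{U_{2,\mu}+u_{0,2}}=O(\varepsilon)$ is absorbed into the $O(\ln\mu/\mu^2)$ term, and what remains of the Laplacian identity is exactly $8\pi\sum_i 1/\theta_i$, matching the main formula. The symmetric argument yields \eqref{eq06}. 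The main obstacle is the calibration in step (iii): having $e^{c_{l,\mu}+C_{l,j}}/\varepsilon^2\to 1$ simultaneously for $l=1,2$ uniformly in $j$ requires the compatibility of $\{\rho_j\}$ and $\{\rho_j^*\}$, which is precisely what assumption (A.1) provides by making $\rho_j/\rho_j^*$ independent of $j$ to leading order.
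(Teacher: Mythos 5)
Your proposal is correct and follows essentially the same route as the paper's proof: compute $-\Delta U_{l,\mu}$ exactly via the piecewise definition of $\omega^*_{x_i,\mu_i}$, split the nonlinearity as $e^{U_{2,\mu}+u_{0,2}}(e^{U_{1,\mu}+u_{0,1}}-1)=-e^{U_{2,\mu}+u_{0,2}}+e^{\sum(U_{i,\mu}+u_{0,i})}$, identify the first piece near $x_j$ as $-e^{V_{x_j,\mu_j}+f_{2,j,{\bf x},\mu}}$ by calibrating the constant $c_{2,\mu}$ through the normalized integrals $\int_\Omega e^{\omega_\mu+u_{0,l}}$ (your pointwise constant $e^{c_{l,\mu}+C_{l,j}}/\varepsilon^2\approx 1$ is exactly the paper's identity $\frac{8k\pi e^{\omega_\mu+u_{0,l}}}{\int_\Omega e^{\omega_\mu+u_{0,l}}}\approx e^{V_{x_j,\mu_j}+f_{l,j}}$), and push the cross term into $O(\mu^{-4}e^{2V+u_{0,1}+u_{0,2}})$. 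The only reorganization is cosmetic (pointwise calibration constants versus ratios of integrals), and you correctly identify the role of (A.1) together with \eqref{eq01}--\eqref{eq02} in making the calibration uniform in $j$ and $l$.
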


\begin{proof}
We only prove \eqref{eq05} since the proof of \eqref{eq06} is similar.\\
By the definition of $U_{1,\mu}$, we find that
\begin{equation}\label{a06}
\begin{split}
-\Delta U_{1,\mu}
&=-\Delta\omega_{\mu}\\
&=\sum_{i=1}^k\left(1_{B_{d_i(x_i)}}e^{V_{x_i,\mu_i}}-8\pi(1-\frac{1}{\theta_i})\right)
\end{split}
\end{equation}
where $1_A=1$ in A and $1_A=0$ otherwise.\\
Using \eqref{a04} and \eqref{a05}, we have
\begin{equation}\label{a07}
\begin{split}
&\frac{1}{\varepsilon^2}e^{U_{2,\mu}+u_{0,2}}\\
=&\frac{8k\pi e^{\omega_{\mu}+u_{0,2}}}{\int_{\Omega} e^{\omega_{\mu}+u_{0,2}}}\left(1+8k\pi\varepsilon^2\frac{\int_{\Omega}e^{\sum_{i=1}^2(\omega_{\mu}+u_{0,i})}}{\int_{\Omega} e^{\omega_{\mu}+u_{0,1}} \int_{\Omega} e^{\omega_{\mu}+u{0,2}}}+O(\varepsilon^4\mu^4)\right)
\end{split}
\end{equation}
and
\begin{equation}\label{a08}
\begin{split}
&\frac{1}{\varepsilon^2}e^{\sum_{i=1}^2(U_{i,\mu}+u_{0,i})}\\
=&64k^2\pi^2\varepsilon^2\left(
\frac{e^{\omega_{\mu}+u_{0,1}}e^{\omega_{\mu}+u_{0,2}}}{\int_{\Omega}e^{\omega_{\mu}+u_{0,1}}\int_{\Omega}e^{\omega_{\mu}+u_{0,2}}}\right)\left(1+16k\pi\varepsilon^2\frac{\int_{\Omega}e^{\sum_{i=1}^2(\omega_{\mu}+u_{0,i})}}{\int_{\Omega}e^{\omega_{\mu}+u_{0,1}} \int_{\Omega}e^{\omega_{\mu}+u_{0,2}}}+O(\varepsilon^4\mu^4)\right).
\end{split}
\end{equation}
Combining \eqref{a06}, \eqref{a07} and \eqref{a08}, we obtain
\begin{equation}\label{a11}
\begin{split}
&-\Delta U_{1,\mu}+\frac{1}{\varepsilon^2}e^{U_{2,\mu}+u_{0,2}}(e^{U_{1,\mu}+u_{0,1}}-1)+8k\pi\\
&=\sum_{i=1}^k \left(1_{B_{d_i(x_i)}}e^{V_{x_i,\mu_i}}+\frac{8\pi}{\theta_i}\right)-\frac{8k\pi e^{\omega_{\mu}+u_{0,2}}}{\int_{\Omega}e^{\omega_{\mu}+u_{0,2}}}+K_{1,u}+R_{1,\mu}
\end{split}
\end{equation}
where
$$K_{1,\mu}=\frac{64k^2\pi^2\varepsilon^2  \int_{\Omega}e^{  2\omega_{\mu}+u_{0,1}+u_{0,2}}}{\int_{\Omega}e^{\omega_{\mu}+u_{0,1}}\int_{\Omega}e^{ \omega_{\mu}+u_{0,2}}} \left[ \frac{e^{2\omega_{\mu}+u_{0,1}+u_{0,2}}}{\int_{\Omega}e^{2\omega_{\mu}+u_{0,1}+u_{0,2}}}-\frac{e^{ \omega_{\mu} +u_{0,2}}}{\int_{\Omega}e^{\omega_{\mu} +u_{0,2}}}                           \right] $$
and
$$
R_{1,\mu}=O\left( \frac{\mu^4\varepsilon^4 e^{\omega_{\mu}+u_{0,2} }}{\int_{\Omega}  e^{\omega_{\mu}+u_{0,2} }}
+ \frac{ \varepsilon^4 e^{2\omega_{\mu}+u_{0,1}+u_{0,2}}  \int_{\Omega}e^{  2\omega_{\mu}+u_{0,1}+u_{0,2}}}{(\int_{\Omega}e^{\omega_{\mu}+u_{0,1}})^2(\int_{\Omega}e^{ \omega_{\mu}+u_{0,2}})^2}+\frac{ \varepsilon^6\mu^4   e^{  2\omega_{\mu}+u_{0,1}+u_{0,2}}}{\int_{\Omega}e^{\omega_{\mu}+u_{0,1}}\int_{\Omega}e^{ \omega_{\mu}+u_{0,2}}}   \right).
$$
Note that
$$\int_{\Omega}e^{\sum_{i=1}^2(\omega_{\mu}^*+u_{0,i})}=O\left(\frac{\mu^2}{\mu^{4(k-1)}}\right)$$
By this, and \eqref{a04} and \eqref{a05}, we obtain
$$K_{1,\mu}=O\left(  \frac{1}{\mu^2}e^{V_{x_i,\mu_i}+u_{0,2}}+\frac{1}{\mu^4}e^{2V_{x_i,\mu_i}+u_{0,1}+u_{0,2}}                           \right)   $$
and
$$R_{1,\mu}=O\left(\frac{1}{\mu^4}e^{V_{x_i,\mu_i}+u_{0,2}}+\frac{1}{\mu^6}e^{2V_{x_i,\mu_i}+u_{0,1}+u_{0,2}}                   \right)$$
Next, by \eqref{a05}
\begin{equation}\label{a12}
\frac{8k\pi e^{\omega_{\mu}+u_{0,2}}}{\int_{\Omega}e^{\omega_{\mu}+u_{0,2}}}
= \frac{8k\pi e^{\omega_{\mu}^*+u_{0,2}}}{\int_{\Omega}e^{\omega_{\mu}^*+u_{0,2}}}
= \sum_{i=1}^k 1_{B_{d_i(x_i)}}e^{V_{x_i,\mu_i}+f_{1,i,\mu}}+O(\frac{1}{\mu^2})
\end{equation}
By this, we are led to \eqref{eq05}.
\end{proof}

\section{The Reduction and The Existence}

In this section, we will use the contraction mapping theorem to show that there exist ${\bf x}=(x_1,\cdots, x_k)$ and ${\bf \mu}=(\mu_1,\cdots,\mu_k)$ such that \eqref{e001} has a solution of this form $
\begin{pmatrix}
u_{1}\\
u_{2}
\end{pmatrix}
=
\begin{pmatrix}
U_{1,\mu}\\
U_{2,\mu}
\end{pmatrix}
+
\begin{pmatrix}
\omega_{1,\mu}\\
\omega_{2,\mu}
\end{pmatrix}
$
 where $\begin{pmatrix}
\omega_{1,\mu}\\
\omega_{2,\mu}
\end{pmatrix}$ is a perturbation term. For this purpose, we will use the linear operator $L_{\mu}$(see \eqref{eq23} below). However, the linear operator $L_{\mu}$ has non-trivial kernel, hence, we only can use the contraction mapping theorem to solve \eqref{e001} up to its kernel. Then we use the non-degeneracy  of $G_1^*+G_2^*$ at the blow-up point $\bf{q}$
and $\mathcal{D}^{(2)}(\bf{q})<0$ to find suitable $\bf{x}$ and $\bf{\mu}$ such that \eqref{e001} has a real solution whose blow-up set is $\{q_1,\cdots, q_k\}$.   Here, we only present the multi-bubble case($k\geq 2$). For the case $k=1$, it can be easily deduced form the multi-bubble case by using the approximation solution \eqref{eq55}. \par

  In view of $U_{1,\mu},U_{2,\mu}$, we consider the following simplified operator
\begin{equation}\label{eq23}
L_{\mu}\begin{pmatrix}
v_{1}\\
v_{2}
\end{pmatrix}=
\begin{pmatrix}
0&\sum_{i=1}^k1_{B_{d_i}(x_i)} e^{V_{x_i,\mu_i}}\\
\sum_{i=1}^k1_{B_{d_i}(x_i)} e^{V_{x_i,\mu_i}}&0
\end{pmatrix}
\begin{pmatrix}
v_{1}\\
v_{2}
\end{pmatrix}
\end{equation}
Then,  $(\omega_{1,\mu},\omega_{2,\mu})$ satisfies
\begin{equation}
L_{\mu}\begin{pmatrix}
\omega_{1,\mu}\\
\omega_{2,\mu}
\end{pmatrix}=
\begin{pmatrix}
 g_{1,\mu}\\
 g_{2,\mu}
\end{pmatrix},
\end{equation}
where
$$
g_{1,\mu}(x,t_1,t_2)=\sum_{i=1}^k1_{B_{d_i}(x_i)} e^{V_{x_i,\mu_i}} t_2-\frac{1}{\varepsilon}e^{U_{2,\mu}+u_{0,2}+t_2}(1-e^{U_{1,\mu}+u_{0,1}+t_1})-\Delta U_{1,\mu}+8k\pi,
$$
and
$$
g_{2,\mu}(x,t_1,t_2)=\sum_{i=1}^k1_{B_{d_i}(x_i)} e^{V_{x_i,\mu_i}} t_1-\frac{1}{\varepsilon}e^{U_{1,\mu}+u_{0,1}+t_1}(1-e^{U_{2,\mu}+u_{0,2}+t_2})-\Delta U_{2,\mu}+8k\pi.
$$
To apply the contraction argument, we first introduce  two function spaces:
$$X_{\alpha,\mu,2}\,\,\text{   and   }\,\,Y_{\alpha,\mu,2}.$$
Fix  a small fixed constant $\alpha>0$, we define
$$\rho(x)=(1+|x|)^{1+\frac{\alpha}{2}},\quad \hat{\rho}(x)=\frac{1}{(1+|x|)(\log (2+|x|))^{1+\frac{\alpha}{2}}}.$$
Denote  $\Omega'= \cup_{j=1}^k B_{d_j}(x_{j})$.
We say $\begin{pmatrix}
\xi_1\\
\xi_2
\end{pmatrix}
$
is in $X_{\alpha,\mu,2 }$
if
\begin{align}
\left\Vert \begin{pmatrix}
\xi_1\\
\xi_2
\end{pmatrix}\right\Vert^2_{X_{\alpha,\mu,2}}&=\sum_{j=1}^k \sum_{i=1}^2\left(||\Delta\widetilde{\xi}_{i,j}\rho||^2_{L^2(B_{  2d_j \mu_j} )}+||\widetilde{\xi}_{i,j}\hat{\rho}||^2_{L^2(B_{  2d_j \mu_j} )} \right)\\
&+  \sum_{i=1}^2 \left(|| \Delta\xi_i||^2_{L^2(  \Omega')} +||\xi_i||_{L^2(  \Omega')}^2 \right)<+\infty,
\end{align}
where $\widetilde{\xi}_{i,j}(y)=\xi(q_j+\frac{1}{\mu_j} y)$ and $B_d=B_d(0)$;
$\begin{pmatrix}
\xi_1\\
\xi_2
\end{pmatrix}
$
is in $Y_{\alpha,\mu,2}$
if
\begin{equation}
\left\Vert \begin{pmatrix}
\xi_1\\
\xi_2
\end{pmatrix}\right\Vert_{Y_{\alpha,\mu }}= \sum_{j=1}^N\sum_{i=1}^2\left(\frac{1}{\mu_j^4}||\widetilde{\xi}_{i,j}\rho||^2_{L^2( B_{2d_j\mu_j})}\right)+\sum_{i=1}^2\left(||\xi_i||^2_{L^2(\Omega\setminus \Omega' )}\right)<+\infty.
\end{equation}
We also denote $||\xi ||_{X_{\alpha,\mu}}= \left\Vert \begin{pmatrix}
\xi\\
0
\end{pmatrix}\right\Vert_{X_{\alpha,\mu,2}}$ and  $||\xi ||_{Y_{\alpha,\mu}}=  \left\Vert\begin{pmatrix}
\xi\\
0
\end{pmatrix}\right\Vert_{Y_{\alpha,\mu,2}}$.  So, we say $\xi\in X_{\alpha,\mu}$ if $||\xi||_{X_{\alpha,\mu}}<+\infty$ and
 $\xi\in Y_{\alpha,\mu}$ if $||\xi||_{Y_{\alpha,\mu}}<+\infty$.\par

Consider the cut-off function $\chi_j \in C^{\infty}(\mathbb{R}^2)$ satisfying
$$
\chi_j(x)=\left\{
\begin{array}{ccc}
1&\text{ for }& |x|\leq d_j\\
0&\text{ for }& |x|\geq 2d_j
\end{array}\right.
$$
and $0\leq \chi_j\leq 1$.
Next, we define the approximated kernel for $L_{\mu}$ as follows:

\begin{equation}
\begin{pmatrix}
Y_{1,0}\\
Y_{2,0}
\end{pmatrix}=
\begin{pmatrix}
-\frac{1}{\mu_1}+\sum_{i=1}^k \sqrt{\frac{\rho_1}{\rho_i}} \frac{2\chi_i(y-x_i)}{\mu_i(1+\mu_i|y-x_i|^2)}   \\
-\frac{1}{\mu_1}+\sum_{i=1}^k \sqrt{\frac{\rho_1}{\rho_i}} \frac{2\chi_i(y-x_i)}{\mu_i(1+\mu_i|y-x_i|^2)} ,
\end{pmatrix}
\end{equation}
\begin{equation}
\begin{pmatrix}
Y_{1,i,j}\\
Y_{2,i,j}
\end{pmatrix}=
\chi_{j}(|x-p_{j,\varepsilon}|)
\begin{pmatrix}
 \frac{\mu_j^2(y_i-x_{j,i})}{1+\mu_j|y-x_i|^2} \\
   \frac{\mu_j^2(y_i-x_{j,i})}{1+\mu_j|y-x_i|^2}
\end{pmatrix},\quad i=1,2,\quad j=1,\cdots, k.
\end{equation}
After calculation, it is not difficult to see that

$$L_{\mu}\begin{pmatrix}
Y_{1,0}\\
Y_{2,0}
\end{pmatrix}=O\left(\frac{1}{\mu^3}\right) ,\,\,L_{\mu}\begin{pmatrix}
Y_{1,i,j}\\
Y_{2,i,j}
\end{pmatrix}=O(1),\,\,i=1,2,\,\,j=1,\cdots,k.$$

Let

\begin{equation}
\begin{pmatrix}
Z_{1,0}\\
Z_{2,0}
\end{pmatrix}= \begin{pmatrix}
\Delta Y_{1,0}\\
\Delta Y_{2,0}
\end{pmatrix}
\end{equation}
and
\begin{equation}
\begin{pmatrix}
Z_{1,i,j}\\
Z_{2,i,j}
\end{pmatrix}=\begin{pmatrix}
\Delta Y_{1,i,j}\\
\Delta Y_{2,i,j}
\end{pmatrix},\quad i=1,2,\quad j=1,\cdots, k.
\end{equation}
Note that  $\begin{pmatrix}
Y_{1,0}\\
Y_{2,0}
\end{pmatrix}$, $\begin{pmatrix}
Y_{1,i,j}\\
Y_{2,i,j}
\end{pmatrix}$, $\begin{pmatrix}
Z_{1,0}\\
Z_{2,0}
\end{pmatrix}$ and  $\begin{pmatrix}
Z_{1,i,j}\\
Z_{2,i,j}
\end{pmatrix}$
are doubly periodic. Let

$$E_{\mu,2}=\left\{\begin{pmatrix}
\omega_1\\
\omega_2
\end{pmatrix}\in X_{\alpha,\mu,2}: \int_{\Omega}\left< \begin{pmatrix}
\omega_1\\
\omega_2
\end{pmatrix} ,\begin{pmatrix}
Z_{1,0}\\
Z_{2,0}
\end{pmatrix}  \right>= \int_{\Omega}\left< \begin{pmatrix}
\omega_1\\
\omega_2
\end{pmatrix} ,\begin{pmatrix}
Z_{1,i,j}\\
Z_{2,i,j}
\end{pmatrix}  \right> =0,\, i=1,2,\, j=1,\cdots, k.    \right\}
$$

and
$$F_{\mu,2}=\left\{\begin{pmatrix}
\omega_1\\
\omega_2
\end{pmatrix}\in Y_{\alpha,\mu,2}  : \int_{\Omega}\left< \begin{pmatrix}
\omega_1\\
\omega_2
\end{pmatrix} ,\begin{pmatrix}
Y_{1,0}\\
Y_{2,0}
\end{pmatrix}  \right>= \int_{\Omega}\left< \begin{pmatrix}
\omega_1\\
\omega_2
\end{pmatrix} ,\begin{pmatrix}
Y_{1,i,j}\\
Y_{2,i,j}
\end{pmatrix}  \right> =0,\, i=1,2,\, j=1,\cdots, k.    \right\}.
$$

We define the project operator $Q_{\mu}:Y_{\alpha,\mu,2}\to F_{\mu,2}$ as follows:
\begin{equation}\label{e023}
Q_{\mu}\begin{pmatrix}
u_1\\
u_2
\end{pmatrix}=\begin{pmatrix}
u_1\\
u_2
\end{pmatrix}-c_0 \begin{pmatrix}
Z_{1,0}\\
Z_{2,0}
\end{pmatrix} -\sum_{i=1}^2\sum_{j=1}^k c_{i,j}\begin{pmatrix}
Z_{1,i,j}\\
Z_{2,i,j}
\end{pmatrix}
\end{equation}
where $c_0$, $c_{i,j}$ are chosen so that
\begin{equation}
Q_{\mu}\begin{pmatrix}
u_1\\
u_2
\end{pmatrix}\in F_{\mu,2}.
\end{equation}

It is standard to prove the following result( or see Lemma 3.1 in \cite{LY2}).
\begin{lemma}\label{lemma31}
There is a constant $C>0$, independent of ${\bf x}$ and $\mu$, such that

\begin{equation}
\left\Vert Q_{\mu}\begin{pmatrix}
u_1\\
u_2
\end{pmatrix}\right\Vert_{Y_{\alpha,\mu,2}}\leq C\left\Vert  \begin{pmatrix}
u_1\\
u_2
\end{pmatrix}\right\Vert_{Y_{\alpha,\mu,2}}.
\end{equation}
\end{lemma}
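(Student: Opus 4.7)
The plan is to view $Q_\mu$ as the oblique projection that removes the span of the approximate kernels $\{(Z_{1,0},Z_{2,0}),\,(Z_{1,i,j},Z_{2,i,j})\}_{i=1,2,\,j=1,\ldots,k}$ so that the output is $L^2$-orthogonal to the testing family $\{(Y_{1,0},Y_{2,0}),\,(Y_{1,i,j},Y_{2,i,j})\}$. Boundedness of $Q_\mu$ will reduce to two ingredients: (a) uniform invertibility of a $(2k+1)\times(2k+1)$ coefficient matrix coming from the orthogonality conditions, and (b) control of the $Y_{\alpha,\mu,2}$-norms of the generators $(Z_{1,\cdot},Z_{2,\cdot})$ with scalings matching those of the dual test functions $(Y_{1,\cdot},Y_{2,\cdot})$.

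First I would write out the orthogonality conditions defining $F_{\mu,2}$; substituting \eqref{e023} turns them into a linear system $M\mathbf{c}=\mathbf{b}$ with
\begin{equation*}
M_{\alpha\beta}=\int_{\Omega}\langle (Z_{1,\beta},Z_{2,\beta}),\,(Y_{1,\alpha},Y_{2,\alpha})\rangle,\qquad b_\alpha=\int_{\Omega}\langle (u_1,u_2),\,(Y_{1,\alpha},Y_{2,\alpha})\rangle,
\end{equation*}
where $\alpha,\beta$ run over $\{0\}\cup\{(i,j):i=1,2,\,j=1,\ldots,k\}$. Using $(Z_{1,\beta},Z_{2,\beta})=\Delta(Y_{1,\beta},Y_{2,\beta})$ and integrating by parts, every entry becomes an integral of the form $\int\nabla Y\cdot \nabla Y$ concentrated near the points $x_j$. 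Rescaling $y=x_j+z/\mu_j$, the factor $\tfrac{2\chi_j}{\mu_j(1+\mu_j^2|y-x_j|^2)}$ limits (after one derivative) to the scaling mode $\tfrac{1-|z|^2}{(1+|z|^2)^2}$ of the Liouville bubble, whereas $\tfrac{\mu_j^2(y_i-x_{j,i})}{1+\mu_j^2|y-x_j|^2}$ limits to the translation mode $\tfrac{z_i}{(1+|z|^2)^2}$. The standard $L^2(\mathbb{R}^2)$-orthogonality between these two families of modes, together with the separation provided by the cut-offs $\chi_j$, forces the off-diagonal blocks of $M$ either to vanish by parity or to be of strictly lower order in $\mu^{-1}$, while the diagonal entries have a definite order in $\mu$. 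After rescaling each row and column by the appropriate power of $\mu_j$, the matrix becomes diagonally dominant for $\mu$ large, so $M^{-1}$ is uniformly bounded in $\mathbf{x}$ and $\mu$.

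For the right-hand side I would apply Cauchy--Schwarz to bound $|b_\alpha|$ by $\|(u_1,u_2)\|_{Y_{\alpha,\mu,2}}$ times a weighted $L^2$-norm of $(Y_{1,\alpha},Y_{2,\alpha})$ against the weight dual to the one defining $Y_{\alpha,\mu,2}$. Combining with the matrix inversion yields coefficient bounds of the form $|c_0|,\,|c_{i,j}|\le C\,\mu^{-\gamma_\alpha}\,\|(u_1,u_2)\|_{Y_{\alpha,\mu,2}}$ with the appropriate weights $\gamma_\alpha$. Since $\|(Z_{1,\alpha},Z_{2,\alpha})\|_{Y_{\alpha,\mu,2}}$ is of the compatible order $\mu^{\gamma_\alpha}$, each product $|c_\alpha|\,\|(Z_{1,\alpha},Z_{2,\alpha})\|_{Y_{\alpha,\mu,2}}$ is $O(\|(u_1,u_2)\|_{Y_{\alpha,\mu,2}})$, and the triangle inequality applied to \eqref{e023} gives the claimed bound.

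The main obstacle is the matrix step: verifying, entry by entry, that after the natural bubble rescaling the only $O(1)$ contributions survive on the diagonal, while all cross-terms are suppressed either by the parity orthogonality of scaling versus translation modes of $V_{x_j,\mu_j}$, or by at least one additional factor of $\mu^{-1}$ coming from the cut-off $\chi_j$ and the restriction to the torus. Once this diagonal dominance is in hand uniformly for $\mu$ in the admissible range $[\beta_1\varepsilon^{-1/2},\beta_2\varepsilon^{-1/2}]$ and for $|x_j-q_j|\le C_1/\mu$, the remaining estimates are a routine application of Cauchy--Schwarz, mirroring the argument in Lemma~3.1 of \cite{LY2}.
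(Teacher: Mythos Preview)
Your proposal is correct and follows exactly the standard route: solve the linear system for the coefficients $c_0,c_{i,j}$, show the Gram matrix is diagonally dominant after the bubble rescaling, and then combine Cauchy--Schwarz bounds on the $b_\alpha$ with the $Y_{\alpha,\mu,2}$-norms of the $(Z_{1,\alpha},Z_{2,\alpha})$. The paper does not actually prove this lemma; it simply declares the result ``standard'' and refers to Lemma~3.1 of \cite{LY2}, whose argument is precisely the one you have sketched.
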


By using the contraction mapping argument, we obtain the following theorem.

\begin{theorem}Assume (A.1)-(A.4) in Theorem \ref{main-thm} hold.
There exists $\varepsilon_0>0$ such that for $\varepsilon\in(0,\varepsilon_0)$,  $|{\bf x}-{\bf q}|<\frac{C_1}{\mu}$
, and $\mu \in   (\frac{\beta_1}{\sqrt{\varepsilon}},  \frac{\beta_2}{\sqrt{\varepsilon}})$ for some constants $C_1,\beta_1,\beta_2>0$, then there exists $\begin{pmatrix}
\omega_{1,\mu}\\
\omega_{2,\mu}
\end{pmatrix}\in E_{\mu,2}$  satisfying
\begin{equation}\label{eq09}
Q_{\mu}\left( L_{\mu}\begin{pmatrix}\omega_{1,\mu}\\ \omega_{2,\mu}\end{pmatrix}-\begin{pmatrix}g_{1,\mu}\\ g_{2,\mu}\end{pmatrix}    \right)=\begin{pmatrix}0\\0  \end{pmatrix}.
\end{equation}

Furthermore, $\begin{pmatrix}
\omega_{1,\mu}\\
\omega_{2,\mu}
\end{pmatrix}$ is a $C^1$ map of $({\bf x}, \mu_1,\cdots,\mu_k)$ to $X_{\alpha,\mu,2}$ and
\begin{equation}
\left\Vert Q_{\mu}\begin{pmatrix}
\omega_{1,\mu}\\
 \omega_{2,\mu}
\end{pmatrix}\right\Vert_{L^{\infty}(\Omega)}
+
\left\Vert Q_{\mu}\begin{pmatrix}
\omega_{1,\mu}\\
 \omega_{2,\mu}
\end{pmatrix}\right\Vert_{X_{\alpha,\mu,2}}\leq \frac{C\ln \mu}{\mu^{2-\frac{\alpha}{2}}},
\end{equation}
where the constant $\alpha$ is the same constant as in $X_{\alpha,\mu,2}$.

\end{theorem}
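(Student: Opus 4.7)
The plan is to recast \eqref{eq09} as a fixed-point problem on the Banach space $E_{\mu,2}$ and apply Banach's contraction principle, following the Lyapunov--Schmidt reduction scheme in \cite{LY2}. First I would split the nonlinear right-hand side as
\begin{equation*}
\begin{pmatrix}g_{1,\mu}(x,\omega_1,\omega_2)\\ g_{2,\mu}(x,\omega_1,\omega_2)\end{pmatrix}
= \begin{pmatrix}E_{1,\mu}\\ E_{2,\mu}\end{pmatrix} + \mathcal{N}_\mu(\omega_1,\omega_2),
\end{equation*}
where $(E_{1,\mu},E_{2,\mu}) = (g_{1,\mu},g_{2,\mu})|_{\omega=0}$ is the residual given by \eqref{eq05}--\eqref{eq06} and $\mathcal{N}_\mu$ collects the terms that are at least quadratic in $\omega$, after extracting the linear part that is absorbed by $L_\mu$ in view of \eqref{eq23}. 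Using the invertibility of $Q_\mu L_\mu : E_{\mu,2}\to F_{\mu,2}$ proved in the appendix, equation \eqref{eq09} becomes the fixed-point equation
\begin{equation*}
\omega = T_\mu(\omega) := (Q_\mu L_\mu)^{-1} Q_\mu\bigl( E_\mu + \mathcal{N}_\mu(\omega)\bigr) \quad\text{on } E_{\mu,2}.
\end{equation*}

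Next I would verify two quantitative estimates. \textbf{Residual estimate.} From \eqref{eq05}--\eqref{eq06}, the $Y_{\alpha,\mu,2}$-norm of $E_\mu$ is controlled by the $\ln\mu/\mu^2$-remainder plus the localized terms $1_{B_{d_i}(x_i)}e^{V_{x_i,\mu_i}}(1-e^{f_{\cdot,i,\mathbf{x},\mu}})+8\pi/\theta_i$; since $f_{l,i,\mathbf{x},\mu}$ vanishes at $x_i$ and its first-order Taylor expansion at $x_i$ matches linear combinations of the approximate-kernel generators $Y_{1,i,j}, Y_{2,i,j}$ and $Y_{1,0}, Y_{2,0}$, the projection $Q_\mu$ (see Lemma \ref{lemma31}) removes the leading obstruction and leaves $\|Q_\mu E_\mu\|_{Y_{\alpha,\mu,2}} = O(\ln\mu/\mu^{2-\alpha/2})$. \textbf{Nonlinear estimate.} Near each bubble the dominant contribution of $\mathcal{N}_\mu$ is $\sim e^{V_{x_i,\mu_i}}(\omega_1\omega_2 + \omega_i^2+\cdots)$, and a weighted Moser--Trudinger / Sobolev inequality tailored to $X_{\alpha,\mu,2}$ yields
\begin{equation*}
\|\mathcal{N}_\mu(\omega)-\mathcal{N}_\mu(\omega')\|_{Y_{\alpha,\mu,2}}
\leq C\bigl(\|\omega\|_{X_{\alpha,\mu,2}}+\|\omega'\|_{X_{\alpha,\mu,2}}\bigr)\,\|\omega-\omega'\|_{X_{\alpha,\mu,2}}.
\end{equation*}
Combined with the uniform bound $\|(Q_\mu L_\mu)^{-1}\|\lesssim 1$ from the appendix, these inequalities show that $T_\mu$ sends the ball $B_K = \{\|\omega\|_{X_{\alpha,\mu,2}}\leq K\ln\mu/\mu^{2-\alpha/2}\}$ into itself and is a $\tfrac{1}{2}$-contraction on $B_K$, provided $K$ is large and $\varepsilon$ is small. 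Banach's fixed-point theorem then yields the unique $\omega_\mu\in E_{\mu,2}$ solving \eqref{eq09} with the claimed $X_{\alpha,\mu,2}$ estimate; the $L^\infty$ bound follows via the two-dimensional Sobolev embedding $H^2\hookrightarrow L^\infty$ applied to the rescaled profiles $\widetilde\omega_{i,j}$; and the $C^1$-dependence on $(\mathbf{x},\mu_1,\dots,\mu_k)$ comes from the implicit function theorem applied to $F(\omega,\mathbf{x},\mu)=Q_\mu(L_\mu\omega-g_\mu)$.

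The main obstacle is the uniform invertibility of $Q_\mu L_\mu$ on $E_{\mu,2}$, deferred to the appendix. The approximate kernel of $L_\mu$ is generated by the translation and scaling invariances of the Liouville system at each bubble, naively giving $3k$ directions; however the constraint \eqref{eq01} forces $\rho_i\mu_i^2=\rho_1\mu_1^2$ and reduces the total scaling degree of freedom to one, leaving $2k+1$ directions realized by $\{Y_{1,0}=Y_{2,0},\,Y_{1,i,j},\,Y_{2,i,j}\}$. Proving quantitatively that these directions exhaust the near-kernel in the weighted norm requires a blow-up analysis at scale $1/\mu_j$ near each $x_j$, together with a non-degeneracy classification for bounded solutions of the linearized Liouville system $\Delta\phi_1+e^{U_2}\phi_2=\Delta\phi_2+e^{U_1}\phi_1=0$ at infinity. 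This rigidity, coupled with the non-degeneracy hypothesis (A.3) for the outer matching and the sign condition (A.4) that will later be used to solve the reduced finite-dimensional problem, is the technical heart of the whole construction.
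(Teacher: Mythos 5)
Your overall scheme (recast \eqref{eq09} as a fixed point of $(Q_\mu L_\mu)^{-1}Q_\mu g_\mu$ on $E_{\mu,2}$ and run a contraction argument, using the invertibility result from the appendix) is exactly what the paper does. However there are two concrete problems with the estimates as you have stated them.

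\textbf{The $\ln\mu$ loss in the linear inverse.} You write that ``the uniform bound $\|(Q_\mu L_\mu)^{-1}\|\lesssim 1$'' follows from the appendix. This is not what Theorem \ref{inv-thm} gives: the inverse costs a factor $\ln\mu$, i.e.\ $\|\omega\|_{L^\infty}+\|\omega\|_{X_{\alpha,\mu,2}}\le C\ln\mu\,\|h\|_{Y_{\alpha,\mu,2}}$. The paper's chain of estimates is $\|B_\mu\omega\|\le C\ln\mu\,\|g_\mu\|_{Y_{\alpha,\mu,2}}\le C\ln\mu/\mu^{2-\alpha/2}$ (cf.\ \eqref{eq59}, \eqref{eq15}). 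Your ball radius $K\ln\mu/\mu^{2-\alpha/2}$ happens to be consistent with the final bound, but the bookkeeping as written is wrong, and you should carry the $\ln\mu$ through the invariance-of-ball and contraction steps rather than take it as $O(1)$.

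\textbf{The residual estimate: projection does not do the work you claim.} You assert that the leading obstruction in the error — the term $\sim e^{V_{x_i,\mu_i}}\,Df_{l,i,\mathbf{x},\mu}(x_i)\cdot(y-x_i)$ coming from the Taylor expansion of $1-e^{f_{l,i,\mathbf{x},\mu}}$ — is killed by $Q_\mu$ because it ``matches linear combinations of the approximate-kernel generators.'' This is not correct in the quantitative sense you need. First, $Q_\mu$ projects orthogonally to the functions $Z_{l,i,j}=\Delta Y_{l,i,j}$, not to the profiles $e^{V_{x_i,\mu_i}}(y-x_i)_j$ themselves; the mismatch between these is of the same order in the $Y_{\alpha,\mu,2}$ norm, so even after projecting a nonzero component remains. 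Second — and more fundamentally — the paper does not rely on $Q_\mu$ at all for this term: a rescaling shows that the contribution of this linear-in-$(y-x_i)$ piece to $\|g_{l,\mu}\|^2_{Y_{\alpha,\mu,2}}$ is of order $|DG^*_l(\mathbf{x})|^2/\mu^2$ (see \eqref{eq11}). To get the required bound $C/\mu^{4-\alpha}$ one must have $|DG^*_l(\mathbf{x})|=O(1/\mu)$, which holds precisely because $\mathbf{q}$ is a critical point of $G^*_l$ (assumption (A.2)) and $|\mathbf{x}-\mathbf{q}|<C_1/\mu$. Your argument never invokes (A.2), and without it the residual estimate collapses. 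You should replace the projection argument by this Taylor-plus-(A.2) bound.

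On a smaller note, the ``technical heart'' you describe (classifying bounded kernel elements of the linearized Liouville \emph{system}) is not how the appendix proceeds: because $Y_{1,0}=Y_{2,0}$ and $Y_{1,i,j}=Y_{2,i,j}$, the linear operator decouples under the change of variables $\omega_1\pm\omega_2$ into the scalar mean-field linearization $\mathcal L_1$ (treated by Theorem A.2 in \cite{LY2}) and the scalar Chern--Simons linearization $\mathcal L_2$ (treated by Theorem B.1 in \cite{LY}); no new rigidity theorem for the system is needed.
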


\begin{proof}
By the Theorem \ref{inv-thm}, the equation \eqref{eq09} can be rewritten as

\begin{equation}
\begin{pmatrix}\omega_{1,\mu}\\ \omega_{2,\mu}\end{pmatrix}= B_{\mu}\begin{pmatrix}\omega_{1,\mu}\\ \omega_{2,\mu}\end{pmatrix}
:=(Q_{\mu}L_{\mu})^{-1}Q_{\mu}  \begin{pmatrix}g_{1,\mu}\\ g_{2,\mu}\end{pmatrix}  .
\end{equation}

Define
$$
S_{\mu}=\left\{\begin{pmatrix}\omega_{1 }\\ \omega_{2} \end{pmatrix} : \begin{pmatrix}\omega_{1 }\\ \omega_{2}\end{pmatrix} \in E_{\mu},\,\, \left\Vert \begin{pmatrix}\omega_{1 }\\ \omega_{2}\end{pmatrix} \right\Vert_{L^{\infty}(\Omega)}+ \left\Vert \begin{pmatrix}\omega_{1 }\\ \omega_{2}\end{pmatrix} \right\Vert_{X_{\alpha,\mu,2}}\leq \frac{1}{\mu}  \right\}.
$$

  Firstly, we show that $B_{\mu}$ maps $S_{\mu}$ to $S_{\mu}$.
By Lemma \ref{lemma31} and \eqref{eq40}, we have

\begin{equation}\label{eq59}
\left\Vert B_{\mu} \begin{pmatrix}\omega_{1,\mu}\\ \omega_{2,\mu}\end{pmatrix} \right\Vert_{L^{\infty}(\Omega)}+ \left\Vert \begin{pmatrix}\omega_{1,\mu }\\ \omega_{2,\mu}\end{pmatrix} \right\Vert_{X_{\alpha,\mu,2}}\leq C \ln\mu \left\Vert Q_{\mu}\begin{pmatrix}g_{1,\mu }\\ g_{2,\mu}\end{pmatrix} \right\Vert_{Y_{\alpha, \mu,2}}\leq   C \ln\mu \left\Vert  \begin{pmatrix}g_{1,\mu }\\ g_{2,\mu}\end{pmatrix} \right\Vert_{Y_{\alpha, \mu,2}}.
\end{equation}

Thus we need to estimate $\left\Vert  \begin{pmatrix}g_{1,\mu }\\ g_{2,\mu}\end{pmatrix} \right\Vert_{Y_{\alpha, \mu,2}}$.

Note that
\begin{equation}\label{eq10}
\begin{split}
&e^{U_{2,\mu}+u_{0,2}+t_2   }(1-e^{U_{1,\mu}+u_{0,1}+t_1   })\\
=&e^{U_{2,\mu}+u_{0,2}   }(1-e^{U_{1,\mu}+u_{0,1}   })+e^{U_{2,\mu}+u_{0,2}   }(1-e^{U_{1,\mu}+u_{0,1}   }) t_2-e^{\sum_{i=1}^2U_{i,\mu}+u_{0,i}}t_1\\
&+O\left(   (e^{U_{1,\mu}+u_{0,1}}   +     e^{U_{2,\mu}+u_{0,2}}+e^{\sum_{i=1}^2U_{i,\mu}+u_{0,i}}) (t_1^2+t_2^2)  \right)
\end{split}
\end{equation}
Thus, by \eqref{eq10}, we obtain

\begin{equation}
\begin{split}
g_{1,\mu}=& h_{\mu} \omega_{2,\mu}-\frac{1}{\varepsilon^2} e^{U_{2,\mu}+u_{0,2}+\omega_{2,\mu}}(1- e^{U_{1,\mu}+u_{0,1}+\omega_{1,\mu}} )+8k\pi-\Delta U_{1,\mu}\\
=& (h_{\mu}-\frac{1}{\varepsilon^2} e^{U_{2,\mu}+u_{0,2}} )\omega_{2,\mu}+O\left(\frac{1}{\varepsilon^2} e^{\sum_{i=1}^2U_{i,\mu}+u_{0,i}}(|\omega_{1,\mu}| +|\omega_{2,\mu}| )  \right)\\
&+ O\left((\frac{1}{\varepsilon^2} e^{ U_{1,\mu}+u_{0,1}}+\frac{1}{\varepsilon^2} e^{ U_{2,\mu}+u_{0,2}})(|\omega_{1,\mu}|^2 +|\omega_{2,\mu}|^2 )  \right)\\
&+O\left( \sum_{i=1}^k e^{V_{x_i,\mu_i}}(D(u_{0,2}(x_i) +8\pi \sum_{j\not= i}G(x_j,x_i))|y-x_i|+|y-x_i|^2      \right)\\
&+O\left(\frac{\ln \mu}{\mu^2}+\sum_{i=1}^k (\frac{1}{\mu^2}e^{V_{x_i,\mu_i}+u_{0,2}}+\frac{1}{\mu^4}e^{2V_{x_i,\mu_i}+u_{0,1}+u_{0,2}}  )           \right).
\end{split}
\end{equation}
By this, we have

\begin{equation}\label{eq11}
\begin{split}
&\frac{1}{\mu^4}|| g_{1,\mu}(\mu^{-1}y+x_i, \omega_{1,\mu}(\mu^{-1}y+x_i),\omega_{2,\mu}(\mu^{-1}y+x_i)    )\rho(y)  ||^2_{ L^2( B_{2d_i})}\\
\leq & C(\frac{\ln^2\mu}{\mu^4}+\varepsilon^4\mu^4)\left\Vert  \begin{pmatrix}
\omega_{1,\mu}\\
 \omega_{2,\mu}
\end{pmatrix}\right\Vert_{L^{\infty}(\Omega)}+\frac{C|DG_2^*(\bf{x})|^2}{\mu^2}\\
\leq & \frac{C}{\mu^{4-\alpha}}.
\end{split}
\end{equation}
On the other hand, by the definition of $(U_{1,\mu}, U_{2,\mu})$
\begin{equation}\label{eq12}
||g_{1,\mu}||_{L^2(\Omega\setminus\Omega')}\leq \frac{C}{\mu^2}.
\end{equation}
Similarly, we have
\begin{equation}\label{eq13}
\frac{1}{\mu^4}|| g_{2,\mu}(\mu^{-1}y+x_i, \omega_{1,\mu}(\mu^{-1}y+x_i),\omega_{2,\mu}(\mu^{-1}y+x_i)    )\rho(y)  ||^2_{L^2(B_{2d_i})}\leq \frac{C}{\mu^{4-\alpha}}
\end{equation}
and
\begin{equation}\label{eq14}
||g_{2,\mu}||_{L^2(\Omega\setminus\Omega')}\leq \frac{C}{\mu^2}.
\end{equation}
By \eqref{eq11}, \eqref{eq12}, \eqref{eq13}   and \eqref{eq14},
\begin{equation}\label{eq15}
\left\Vert  \begin{pmatrix}
g_{1,\mu}\\
 g_{2,\mu}
\end{pmatrix}\right\Vert_{Y_{\alpha,\mu,2}}\leq \frac{C}{\mu^{2-\frac{\alpha}{2}}}
\end{equation}
Next, we show that $B_{\mu}$ is a contraction map.
For any $(\omega_{1,\mu},\omega_{2,\mu})$ and $(\widetilde{  \omega_{1,\mu}  }, \widetilde{\omega_{2,\mu}}  )\in S_{\mu}$, as the calculation above, we have
\begin{equation}\label{eq16}\begin{split}
&\left\Vert B_{\mu} \begin{pmatrix}
\omega_{1,\mu}\\
 \omega_{2,\mu}
\end{pmatrix}-   B_{\mu} \begin{pmatrix}
\widetilde{\omega_{1,\mu}}\\
\widetilde{ \omega_{2,\mu}}
\end{pmatrix}\right\Vert_{ L^{\infty}(\Omega)}
+\left\Vert B_{\mu} \begin{pmatrix}
\omega_{1,\mu}\\
 \omega_{2,\mu}
\end{pmatrix}-   B_{\mu} \begin{pmatrix}
\widetilde{\omega_{1,\mu}}\\
\widetilde{ \omega_{2,\mu}}
\end{pmatrix}\right\Vert_{X_{\alpha,\mu,2}} \\
\leq &C\ln \mu \left\Vert   \begin{pmatrix}
g_{1,\mu}(x,  {\omega_{1,\mu}},    { \omega_{2,\mu}})\\
 g_{2,\mu}(x,  {\omega_{1,\mu}},   { \omega_{2,\mu}})
\end{pmatrix}-    \begin{pmatrix}
g_{1,\mu}(x, \widetilde{\omega_{1,\mu}},   \widetilde{ \omega_{2,\mu}})\\
g_{1,\mu}(x,\widetilde{\omega_{1,\mu}}, \widetilde{ \omega_{2,\mu}})
\end{pmatrix}\right\Vert_{X_{\alpha,\mu,2}} .
\end{split}
\end{equation}

Combing \eqref{eq15} and \eqref{eq16}, we have proved that $B_{\mu}$ is a contraction map. Furthermore, by the contraction mapping theorem, we know that there exists $(\omega_{1,\mu},\omega_{2,\mu})\in S_{\mu}$ satisfying
\begin{equation}\label{eq26}
\left\Vert  \begin{pmatrix}
\omega_{1,\mu}\\
 \omega_{2,\mu}
\end{pmatrix}\right\Vert_{L^{\infty}(\Omega)}
+
\left\Vert \begin{pmatrix}
\omega_{1,\mu}\\
 \omega_{2,\mu}
\end{pmatrix}\right\Vert_{X_{\alpha,\mu,2}}\leq \left\Vert  \begin{pmatrix}
g_{1,\mu}\\
g_{2,\mu}
\end{pmatrix}\right\Vert_{Y_{\alpha,\mu,2}} \leq \frac{C\ln \mu}{\mu^{2-\frac{\alpha}{2}}}.
\end{equation}
\end{proof}

By the above theorem, we obtain that for $|\bf{x}-\bf{q}|<\frac{1}{\mu}$ and $\mu\in[\frac{\beta_0}{\sqrt{\varepsilon}},\frac{\beta_0}{\sqrt{\varepsilon}}]$, then there exists a doubly periodic function $(\omega_{1,\mu},\omega_{2,\mu})$,
 $c_0$ and $c_{i,j}$, $i=1,2,$ $j=1,\cdots, k$, such that
 \begin{equation}
 \begin{split}
 &\Delta
 \begin{pmatrix}
 U_{1,\mu}+\omega_{1,\mu}\\
  U_{2,\mu}+\omega_{2,\mu}
 \end{pmatrix}
+  \begin{pmatrix}
\frac{1}{\varepsilon^2} e^{U_{2,\mu}+u_{0,2}+\omega_{2,\mu}}(1-e^{U_{1,\mu}+u_{0,1}+\omega_{1,\mu}})-8k\pi\\
 \frac{1}{\varepsilon^2} e^{U_{1,\mu}+u_{0,1}+\omega_{1,\mu}}(1-e^{U_{2,\mu}+u_{0,2}+\omega_{2,\mu}})-8k\pi
 \end{pmatrix}\\
=& c_0 \begin{pmatrix}
Z_{1,0}\\
Z_{2,0}
\end{pmatrix} +\sum_{i=1}^2\sum_{j=1}^k \begin{pmatrix}
Z_{1,i,j}\\
Z_{2,i,j}
\end{pmatrix}.
 \end{split}
 \end{equation}
To obtain a true solution for \eqref{e001}, we need to choose suitable $\bf{x}$ and $\bf{\mu}$ such that
\begin{equation}
c_0=c_{i,j}=0,\, i=1,2,\, j=1,\cdots,k.
\end{equation}
We begin with the following simple observation.

\begin{lemma}\label{lemma33}
If

\begin{equation} \label{eq07}
\int_{\Omega}\left<
\begin{pmatrix}
\Delta ( U_{1,\mu}+\omega_{1,\mu}))+\frac{1}{\varepsilon^2}e^{U_{2,\mu}+\omega_{2,\mu}+u_{0,2}}(1-e^{U_{1,\mu}+\omega_{1,\mu}+u_{0,1}})-8k\pi\\
\Delta (U_{2,\mu}+\omega_{1,\mu}))+\frac{1}{\varepsilon^2}e^{U_{1,\mu}+\omega_{1,\mu}+u_{0,1}}(1-e^{U_{2,\mu}+\omega_{2,\mu}+u_{0,2}})-8k\pi
\end{pmatrix}
,\begin{pmatrix}
Y_{1,0}\\
Y_{2,0}
\end{pmatrix}
\right>=0
\end{equation}
and for  $i=1,2,\,$ $j=1,\cdots, k,$
\begin{equation} \label{eq08}
\int_{\Omega}\left<
\begin{pmatrix}
\Delta ( U_{1,\mu}+\omega_{1,\mu}))+\frac{1}{\varepsilon^2}e^{U_{2,\mu}+\omega_{2,\mu}+u_{0,2}}(1-e^{U_{1,\mu}+\omega_{1,\mu}+u_{0,1}})-8k\pi\\
\Delta (U_{2,\mu}+\omega_{1,\mu}))+\frac{1}{\varepsilon^2}e^{U_{1,\mu}+\omega_{1,\mu}+u_{0,1}}(1-e^{U_{2,\mu}+\omega_{2,\mu}+u_{0,2}})-8k\pi
\end{pmatrix}
,\begin{pmatrix}
Y_{1,i,j}\\
Y_{2,i,j}
\end{pmatrix}
\right>=0,
\end{equation}
then
$c_0=c_{i,j}=0$, $i=1,2,$ $j=1,\cdots, k.$
\end{lemma}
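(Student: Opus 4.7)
The plan is the standard Lyapunov--Schmidt closure. Starting from the identity that defines $c_0$ and $\{c_{i,j}\}$,
\begin{align*}
&\Delta\!\begin{pmatrix}U_{1,\mu}+\omega_{1,\mu}\\ U_{2,\mu}+\omega_{2,\mu}\end{pmatrix}
+\frac{1}{\varepsilon^{2}}\!\begin{pmatrix}e^{U_{2,\mu}+\omega_{2,\mu}+u_{0,2}}(1-e^{U_{1,\mu}+\omega_{1,\mu}+u_{0,1}})\\ e^{U_{1,\mu}+\omega_{1,\mu}+u_{0,1}}(1-e^{U_{2,\mu}+\omega_{2,\mu}+u_{0,2}})\end{pmatrix}
-\!\begin{pmatrix}8k\pi\\8k\pi\end{pmatrix}\\
&\qquad =c_{0}\!\begin{pmatrix}Z_{1,0}\\ Z_{2,0}\end{pmatrix}
+\sum_{i=1,2}\sum_{j=1}^{k} c_{i,j}\!\begin{pmatrix}Z_{1,i,j}\\ Z_{2,i,j}\end{pmatrix},
\end{align*}
I would take the $L^{2}(\Omega)$-pairing of both sides with each of $(Y_{1,0},Y_{2,0})^{T}$ and $(Y_{1,i,j},Y_{2,i,j})^{T}$. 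Hypotheses \eqref{eq07}--\eqref{eq08} kill every left-hand side pairing, leaving the homogeneous linear system $\mathcal{A}\mathbf{c}=0$ in the $2k+1$ unknowns $\mathbf{c}=(c_{0},c_{1,1},c_{2,1},\ldots,c_{1,k},c_{2,k})^{T}$, with entries
\begin{equation*}
\mathcal{A}_{\alpha,\beta}=\int_{\Omega}\!\bigl(Z_{1,\alpha}Y_{1,\beta}+Z_{2,\alpha}Y_{2,\beta}\bigr)=-2\int_{\Omega}\!\nabla Y_{1,\alpha}\cdot\nabla Y_{1,\beta},
\end{equation*}
where I have used $Z_{\ell,\bullet}=\Delta Y_{\ell,\bullet}$, the fact that $Y_{1,\bullet}\equiv Y_{2,\bullet}$ in every one of the approximate kernels, and integration by parts on the flat torus (no boundary terms). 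The lemma thus reduces to showing that this symmetric $(2k+1)\times(2k+1)$ matrix $\mathcal{A}$ is non-singular for $\mu$ large.

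The matrix $\mathcal{A}$ is block-diagonal to leading order, through three independent cancellations. First, each translation kernel $Y_{1,i,j}$ is supported in $B_{2d_{j}}(x_{j})$ via $\chi_{j}$ and the balls $B_{2d_{j}}(x_{j})$ are pairwise disjoint, so $\mathcal{A}_{(i,j),(i',j')}=0$ whenever $j\neq j'$. Second, when $j=j'$ but $i\neq i'$, the integrand is odd in $(y-x_{j})_{i}$ over a radially symmetric domain and integrates to zero. Third, the couplings $\mathcal{A}_{0,(i,j)}$ vanish to leading order: inside $B_{d_{j}}(x_{j})$ the function $Y_{1,0}$ is a radial profile in $y-x_{j}$ plus an additive constant $-1/\mu_{1}$ plus polynomially small tails from the other bubbles; the constant has zero gradient, the radial piece pairs to zero with $\nabla Y_{1,i,j}$ by reflection in the $i$-th coordinate, and the tails contribute only $O(\mu^{-N})$ for every $N$.

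For the diagonal entries, rescaling $z=\mu_{j}(y-x_{j})$ inside $B_{d_{j}}(x_{j})$ gives
\begin{equation*}
\mathcal{A}_{(i,j),(i,j)}=-2\mu_{j}^{2}\int_{\mathbb{R}^{2}}\!\Bigl|\nabla_{z}\tfrac{z_{i}}{1+|z|^{2}}\Bigr|^{2}dz\,(1+o(1)),
\end{equation*}
of order $\mu_{j}^{2}$ and strictly negative. For the scaling entry, the relation $\rho_{i}\mu_{i}^{2}=\rho_{1}\mu_{1}^{2}$ from \eqref{eq01} produces, after changing variables in each of the $k$ balls,
\begin{equation*}
\mathcal{A}_{0,0}=-\frac{C_{*}k}{\mu_{1}^{2}}\,(1+o(1)),\qquad C_{*}=32\int_{\mathbb{R}^{2}}\!\frac{|z|^{2}}{(1+|z|^{2})^{4}}\,dz>0.
\end{equation*}
Hence $\det\mathcal{A}\asymp\mu^{4k-2}\neq 0$ for $\mu$ large, which forces $\mathbf{c}=0$.

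The only technical point I expect to require care is verifying that the three cancellations above survive the deformations caused by the cutoffs $\chi_{j}$ and by the long-range tails of $Y_{1,0}$ from distant bubbles. Both effects are controlled by the explicit polynomial decay of the bubble profile $2/(\mu_{i}(1+\mu_{i}^{2}|y-x_{i}|^{2}))$, so the corrections are much smaller than the diagonal entries $\mu_{j}^{2}$ and $1/\mu_{1}^{2}$; diagonal dominance, and therefore invertibility of $\mathcal{A}$, then follows comfortably.
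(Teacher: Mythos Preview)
Your argument is correct and is precisely the standard Lyapunov--Schmidt closure that the paper has in mind. In fact the paper gives no proof of this lemma at all: it is stated as a ``simple observation'' and then the text moves directly to computing the left-hand sides of \eqref{eq07} and \eqref{eq08}. The reason the author can treat it as obvious is that the invertibility of your matrix $\mathcal{A}$ is exactly what is needed to make the projection $Q_\mu$ in \eqref{e023} well-defined; Lemma~\ref{lemma31} therefore already presupposes it. Your write-up simply makes that implicit step explicit.

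Two small remarks. First, because the cutoffs $\chi_j$ have pairwise disjoint supports and are radial about $x_j$, the off-diagonal entries $\mathcal{A}_{0,(i,j)}$ and $\mathcal{A}_{(1,j),(2,j)}$ vanish \emph{exactly} by the reflection symmetry you describe, not merely up to $O(\mu^{-N})$ tails; so the technical caveat in your last paragraph is not actually needed. Second, the phrase ``diagonal dominance'' is slightly misleading here, since the diagonal entries live on very different scales ($\mu_j^{2}$ versus $\mu_1^{-2}$); what you are really using is that the matrix is block-diagonal, hence its determinant is the product of the diagonal entries up to $o(1)$, which is nonzero. Neither point affects the validity of your proof.
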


We calculate the left hand side of \eqref{eq07} and \eqref{eq08} in the following two lemmas.

\begin{lemma}\label{lemma34}
For $i=1,2$, $j=1,\cdots,k$, there exist  $A_{i,j}>0$, such that

%
%
%
%
%
%

\begin{equation}\label{eq25}
\begin{split}
&\int_{\Omega}\left<
\begin{pmatrix}
\Delta ( U_{1,\mu}+\omega_{1,\mu}))+\frac{1}{\varepsilon^2}e^{U_{2,\mu}+\omega_{2,\mu}+u_{0,2}}(1-e^{U_{1,\mu}+\omega_{1,\mu}+u_{0,1}})-8k\pi\\
\Delta (U_{2,\mu}+\omega_{1,\mu}))+\frac{1}{\varepsilon^2}e^{U_{1,\mu}+\omega_{1,\mu}+u_{0,1}}(1-e^{U_{2,\mu}+\omega_{2,\mu}+u_{0,2}})-8k\pi
\end{pmatrix}
,\begin{pmatrix}
Y_{1,i,j}\\
Y_{2,i,j}
\end{pmatrix}
\right>\\
=& A_{i,j}\left(  \frac{\partial G_1^*(\bf{x})}{\partial x_{i,j}}  +\frac{\partial G_2^*(\bf{x})}{\partial x_{i,j}}               \right)+O\left(\frac{\ln \mu}{\mu^{2-\frac{\alpha}{2}}}\right)
\end{split}
\end{equation}
 \end{lemma}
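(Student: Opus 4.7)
\smallskip
\noindent\textbf{Proof proposal.}
The plan is to substitute the pointwise identities for the residual of $(U_{1,\mu},U_{2,\mu})$ established in Proposition 2.2 (equations \eqref{eq05}--\eqref{eq06}) into the inner product, linearize in $(\omega_{1,\mu},\omega_{2,\mu})$, and then reduce the leading behaviour to a Taylor expansion of $f_{1,j,{\bf x},\mu}$ and $f_{2,j,{\bf x},\mu}$ at $x_j$. First I would write
\begin{align*}
\Delta(U_{1,\mu}+\omega_{1,\mu})+\tfrac{1}{\varepsilon^2}e^{U_{2,\mu}+\omega_{2,\mu}+u_{0,2}}(1-e^{U_{1,\mu}+\omega_{1,\mu}+u_{0,1}})-8k\pi
\end{align*}
as the sum of the residual computed in \eqref{eq05} plus a linear correction $-\frac{1}{\varepsilon^2}e^{U_{2,\mu}+u_{0,2}}(1-e^{U_{1,\mu}+u_{0,1}})\omega_{2,\mu}+\frac{1}{\varepsilon^2}e^{U_{1,\mu}+u_{0,1}+U_{2,\mu}+u_{0,2}}\omega_{1,\mu}$, plus a quadratic remainder $O\bigl((e^{U_{1,\mu}+u_{0,1}}+e^{U_{2,\mu}+u_{0,2}}+e^{\sum U_{i,\mu}+u_{0,i}})(\omega_{1,\mu}^2+\omega_{2,\mu}^2)/\varepsilon^2\bigr)$, analogously for the second slot using \eqref{eq06}. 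Pairing these against $(Y_{1,i,j},Y_{2,i,j})$ and using the $X_{\alpha,\mu,2}$-bound \eqref{eq26} on $(\omega_{1,\mu},\omega_{2,\mu})$ together with the fact that $Y_{m,i,j}$ is supported in $B_{2d_j}(x_j)$ and satisfies $|Y_{m,i,j}|\lesssim \mu_j$, every term except the leading ones will be seen to be $O(\ln\mu/\mu^{2-\alpha/2})$.

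Next, since $\chi_j$ localizes $Y_{m,i,j}$ near $x_j$, only the $l=j$ summand in $\sum_l 1_{B_{d_l}(x_l)}e^{V_{x_l,\mu_l}}(1-e^{f_{m,l,{\bf x},\mu}})$ survives to leading order, and the constant term $8\pi/\theta_j$ integrates against $Y_{m,i,j}$ into an $O(\mu^{-2})$ contribution by the odd symmetry of $Y_{m,i,j}$ in $y-x_j$. For the principal piece I would rescale $y=x_j+\mu_j^{-1}z$ and Taylor-expand
\begin{equation*}
1-e^{f_{2,j,{\bf x},\mu}(y)}=-\bigl\langle Df_{2,j,{\bf x},\mu}(x_j),y-x_j\bigr\rangle+O(|y-x_j|^2+\mu^{-2}),
\end{equation*}
and use the identity $\partial_{x_{j,i}}G_2^*({\bf x})=\partial_{i}f_{2,j,{\bf x}}(x_j)+O(\mu^{-2})$ recorded in the introduction. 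Together with the radial structure of $V_{x_j,\mu_j}$, the integral
\begin{equation*}
\int_{B_{d_j}(x_j)} e^{V_{x_j,\mu_j}}(1-e^{f_{2,j,{\bf x},\mu}})\,Y_{1,i,j}\,dy
\end{equation*}
reduces to $-\partial_{x_{j,i}}G_2^*({\bf x})$ times the positive constant
\begin{equation*}
c_{i,j}:=\int_{\mathbb{R}^2}\frac{8\mu_j^{2}\,z_i^{2}}{(1+|z|^2)^{2}}\cdot\frac{1}{1+|z|^2}\,\mu_j^{-2}dz,
\end{equation*}
where the inner radial/angular integrals give an explicit positive constant independent of $\mu$. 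The analogous computation in the second slot, with $f_{1,j,{\bf x},\mu}$ replacing $f_{2,j,{\bf x},\mu}$, yields $-\partial_{x_{j,i}}G_1^*({\bf x})$ times the same positive constant. Adding the two contributions and setting $A_{i,j}$ to the (positive) absolute value of this constant produces the announced formula, up to an overall sign convention that is absorbed into $A_{i,j}$.

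The remaining work is error control. The terms involving $\omega_{1,\mu},\omega_{2,\mu}$ are handled via Cauchy--Schwarz in the weighted norm $X_{\alpha,\mu,2}$, noting $\|Y_{m,i,j}\rho^{-1}\|_{L^2(B_{2d_j\mu_j})}$ is uniformly bounded after rescaling; this together with \eqref{eq26} gives an $O(\ln\mu/\mu^{2-\alpha/2})$ bound. The remainders $O(\frac{\ln\mu}{\mu^2}+\sum_i(\mu^{-2}e^{V_{x_i,\mu_i}+u_{0,l}}+\mu^{-4}e^{2V_{x_i,\mu_i}+u_{0,1}+u_{0,2}}))$ from \eqref{eq05}--\eqref{eq06} are integrable against $Y_{m,i,j}$ and produce $O(\ln\mu/\mu^2)$ as well. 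The main obstacle is ensuring that the Taylor expansion of $1-e^{f_{m,j,{\bf x},\mu}}$ does not generate a spurious leading-order contribution when one uses $\theta_j=1+(\mu_jd_j)^2$ (since $1-1/\theta_j=1+O(\mu^{-2})$); this is the step where one must carefully use the identity relating $\partial f_{m,j,{\bf x},\mu}(x_j)$ to $\partial_{x_{j,i}}G_m^*({\bf x})$ modulo $O(\mu^{-2})$, so that the $\theta_j$-dependence is absorbed into the error $O(\ln\mu/\mu^{2-\alpha/2})$ rather than corrupting the leading term.
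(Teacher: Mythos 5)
Your overall plan for the leading order is in the same spirit as the paper's Step~1: substitute the residual decomposition, localize via $\chi_j$, Taylor-expand $1-e^{f_{m,j,{\bf x},\mu}}$, use the identity $\partial_{x_{j,i}}G_m^*({\bf x})=\partial_i f_{m,j,{\bf x}}(x_j)$, and collect the positive constant $\int_{\mathbb R^2}\frac{8z_i^2}{(1+|z|^2)^3}\,dz$. (Small but real issue: the LHS of \eqref{eq05} is $-\bigl[\Delta U_{1,\mu}+\tfrac{1}{\varepsilon^2}e^{U_{2,\mu}+u_{0,2}}(1-e^{U_{1,\mu}+u_{0,1}})-8k\pi\bigr]$, so when you write the lemma's quantity ``as the sum of the residual computed in \eqref{eq05} plus a linear correction'' you have dropped a sign; you cannot simply ``absorb'' it into $A_{i,j}$ because the lemma requires $A_{i,j}>0$. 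Tracking the sign gives $+A_{i,j}$, so no absorption is needed.)

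The genuine gap is in the error control for the $\omega$-dependent terms. Your claim that $\Vert Y_{m,i,j}\rho^{-1}\Vert_{L^2(B_{2d_j\mu_j})}$ is uniformly bounded after rescaling is false: after $y=x_j+\mu_j^{-1}z$ one has $\widetilde Y_{m,i,j}(z)\sim \mu_j\,z_i/(1+|z|^2)$, so this norm is $O(\mu_j)$, not $O(1)$. A naive Cauchy--Schwarz pairing of the linear-in-$\omega$ terms (the $\Delta\omega_{m,\mu}$ piece and the potential piece $\sum_l 1_{B_{d_l}}e^{V_{x_l,\mu_l}}\omega_{m',\mu}$) against $Y_{m,i,j}$ therefore only gives $O\bigl(\mu\,\Vert\omega\Vert_{X_{\alpha,\mu,2}}\bigr)=O\bigl(\ln\mu/\mu^{1-\alpha/2}\bigr)$, which is a full factor of $\mu$ larger than the claimed $O(\ln\mu/\mu^{2-\alpha/2})$. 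To recover the sharp error, the paper does something structurally different (its Step~2): it linearizes via the mean-value theorem, replaces the coefficient by $\sum_l 1_{B_{d_l}}e^{V_{x_l,\mu_l}}$ up to controllable errors, and then \emph{integrates by parts} (using double periodicity) so that the linearized operator acts on $(Y_{1,i,j},Y_{2,i,j})$ rather than on $(\omega_{1,\mu},\omega_{2,\mu})$. Since $(Y_{1,i,j},Y_{2,i,j})$ is an approximate kernel element, $\Delta Y_{m,i,j}+\sum_l 1_{B_{d_l}}e^{V_{x_l,\mu_l}}Y_{m',i,j}=O(1)$ (supported in the cutoff annulus), and the orthogonality $\int_\Omega\langle\omega,Z_{m,i,j}\rangle=0$ handles the Laplacian piece. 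The essential cancellation between $\int\Delta\omega\cdot Y$ and $\int(\sum e^{V})\omega\cdot Y$---visible only when both are moved onto $Y$---is invisible to term-by-term Cauchy--Schwarz, and this is exactly what your proposal is missing.

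Secondary remark: the $8\pi/\theta_j$ constant integrates to exactly zero against $Y_{m,i,j}$ by odd symmetry (since $\chi_j$ is radial), not merely $O(\mu^{-2})$; this doesn't affect the conclusion, but the more basic point is that the error analysis needs the integration-by-parts/kernel structure, not weighted Cauchy--Schwarz.
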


\begin{proof}
We mainly use \eqref{eq05}, \eqref{eq06} and \eqref{eq26}  to obtain the estimate \eqref{eq25}. We calculate the case $j=1$ only. The others are similar. \par
Step 1. We  first calculate this term
\begin{equation}
\int_{\Omega}\left<
\begin{pmatrix}
\Delta U_{1,\mu}+\frac{1}{\varepsilon^2}e^{U_{2,\mu}+u_{0,2}}-8k\pi\\
\Delta U_{2,\mu}+\frac{1}{\varepsilon^2}e^{U_{1,\mu}+u_{0,1}}-8k\pi
\end{pmatrix}
,\begin{pmatrix}
Y_{1,i,1}\\
Y_{2,i,1}
\end{pmatrix}
\right> .
\end{equation}
By the definition of $(U_{1,\mu},U_{2,\mu})$ and symmetry, we find that
\begin{equation}
\begin{split}
&\int_{\Omega} \left<  \begin{pmatrix}
\Delta U_{1,\mu}-8k\pi\\
\Delta U_{2,\mu}-8k\pi
\end{pmatrix}
\begin{pmatrix}
Y_{1,i,1}\\
Y_{2,i,1}
\end{pmatrix}
\right>  \\
=& \int_{\Omega} 1_{B_{d_1(x_1)}} e^{V_{x_1,\mu_1}}Y_{1,i,1}+  1_{B_{d_1}(x_1)} e^{V_{x_1,\mu_1}}Y_{2,i,1} + \sum_{i=1}^k\frac{8\pi}{\theta_i}\int_{\Omega}Y_{1,i,1}+Y_{2,i,1}=0
\end{split}
\end{equation}
Next,
\begin{equation}\begin{split}
&\int_{\Omega}\left<
\begin{pmatrix}
 \frac{1}{\varepsilon^2}e^{U_{2,\mu}+u_{0,2}}-8k\pi\\
 \frac{1}{\varepsilon^2}e^{U_{1,\mu}+u_{0,1}}-8k\pi
\end{pmatrix}
,\begin{pmatrix}
Y_{1,i,1}\\
Y_{2,i,1}
\end{pmatrix}
\right>\\
=&\frac{8k\pi\int_{\Omega}  e^{\omega_{\mu}+u_{0,1}}Y_{1,i,1}}{\int_{\Omega}  e^{\omega_{\mu}+u_{0,1}}}+
\frac{8k\pi\int_{\Omega}  e^{\omega+u_{0,2}}Y_{2,i,1}}{\int_{\Omega}  e^{\omega+u_{0,2}}}\\
=&\frac{   \mu_2^2\cdots \mu_k^2 \int_{B_{d_1}(x_1)}e^{\omega^*_{\mu}+u_{0,2}}Y_{1,i,1}  }{8^{k-1}(\rho_1^*+O(\frac{1}{\mu}))}+\frac{   \mu_2^2\cdots \mu_k^2 \int_{B_{d_1}(x_1)}e^{\omega^*_{\mu}+u_{0,1}}Y_{2,i,1}  }{8^{k-1}(\rho_1 +O(\frac{1}{\mu}))}+O(\frac{1}{\mu^2})\\
=& \int_{\mathbb{R}^2}\frac{8}{(1+|y|^2)^2} \frac{|y|^2}{1+|y|^2}dy        \left(   \frac{\partial G_2^*(\bf{x})}{\partial x_{i,2}} +\frac{\partial G_1^*(\bf{x})}{\partial x_{i,1}} \right)+O(\frac{\ln \mu}{\mu^2}),
\end{split}
\end{equation}
where $(A_2)$ is used. We thus  denote $A_{i,j}=\int_{\mathbb{R}^2}\frac{8}{(1+|y|^2)^2} \frac{|y|^2}{1+|y|^2} dy     $.\\
Step 2. Next, we estimate the remainder term. By mean value theorem, there exist $t_i$ between 0 and $\omega_{i,\mu}$($i=1,2$), such that
\begin{equation}\label{eq28}
\begin{split}
&\int_{\Omega}\left<
\begin{pmatrix}
\Delta  \omega_{1,\mu}  +\frac{1}{\varepsilon^2}(e^{U_{2,\mu}+\omega_{2,\mu}+u_{0,2}}- e^{U_{2,\mu}+u_{0,2}}) \\
\Delta  \omega_{2,\mu}  +\frac{1}{\varepsilon^2}(e^{U_{1,\mu}  +\omega_{1,\mu}+u_{0,1}}-e^{U_{1,\mu}+u_{0,1}})
\end{pmatrix}
,\begin{pmatrix}
Y_{1,i,1}\\
Y_{2,i,1}
\end{pmatrix}
\right>\\
=& \int_{\Omega}\left<
\begin{pmatrix}
\Delta  \omega_{1,\mu}  +\frac{1}{\varepsilon^2} e^{U_{2,\mu}+u_{0,2}+t_2}\omega_{2,\mu} \\
\Delta  \omega_{2,\mu}  +\frac{1}{\varepsilon^2} e^{U_{1,\mu}+u_{0,1}+t_1}\omega_{1,\mu}
\end{pmatrix}
,\begin{pmatrix}
Y_{1,i,1}\\
Y_{2,i,1}
\end{pmatrix}
\right>\\
=&\int_{\Omega}\left<
\begin{pmatrix}
\Delta  \omega_{1,\mu}  + \frac{8\pi e^{\omega_{\mu}^*+u_{0,2}}} {\int_{\Omega} e^{\omega_{\mu}^*+u_{0,2}}}\omega_{2,\mu} \\
\Delta  \omega_{2,\mu}  +\frac{8\pi e^{\omega_{\mu}^*+u_{0,1}}} {\int_{\Omega} e^{\omega_{\mu}^*+u_{0,1}}}\omega_{1,\mu}
\end{pmatrix}
,\begin{pmatrix}
Y_{1,i,1}\\
Y_{2,i,1}
\end{pmatrix}
\right>+O\left(\mu    \left\Vert  \begin{pmatrix}
\omega_{1,\mu}\\
 \omega_{2,\mu}
\end{pmatrix}\right\Vert_{L^{\infty}(\Omega)}^2 \right)\\
=& \int_{\Omega}\left<
\begin{pmatrix}
\Delta  \omega_{1,\mu}  +  \sum_{m=1}^k1_{B_{d_m}(x_m)} e^{V_{x_m,\mu_m}}\omega_{2,\mu} \\
\Delta  \omega_{2,\mu}  +  \sum_{m=1}^k1_{B_{d_m}(x_m)} e^{V_{x_m,\mu_m}}  \omega_{1,\mu}
\end{pmatrix}
,\begin{pmatrix}
Y_{1,i,1}\\
Y_{2,i,1}
\end{pmatrix}
\right>+O\left(\mu    \left\Vert  \begin{pmatrix}
\omega_{1,\mu}\\
 \omega_{2,\mu}
\end{pmatrix}\right\Vert_{L^{\infty}(\Omega)}^2 \right)\\
\end{split}
\end{equation}
Since $(\omega_{1,\mu},\omega_{2,\mu})$ and $(Y_{1,i,1},Y_{2,i,1})$ are doubly periodic,
\begin{equation}\label{eq29}
\begin{split}
 &\int_{\Omega}\left<
\begin{pmatrix}
\Delta  \omega_{1,\mu}  +  \sum_{m=1}^k1_{B_{d_m}(x_m)} e^{V_{x_m,\mu_m}}\omega_{2,\mu} \\
\Delta  \omega_{2,\mu}  +  \sum_{m=1}^k1_{B_{d_m}(x_m)} e^{V_{x_m,\mu_m}}  \omega_{1,\mu}
\end{pmatrix}
,\begin{pmatrix}
Y_{1,i,1}\\
Y_{2,i,1}
\end{pmatrix}
\right>\\
=& \int_{\Omega}\left<
\begin{pmatrix}
\Delta  Y_{1,i,1} +  \sum_{m=1}^k1_{B_{d_m}(x_m)} e^{V_{x_m,\mu_m}} Y_{2,i,1}\\
\Delta  Y_{2,i,1} +  \sum_{m=1}^k1_{B_{d_m}(x_m)} e^{V_{x_m,\mu_m}}   Y_{1,i,1}
\end{pmatrix}
,\begin{pmatrix}
\omega_{1,\mu} \\
 \omega_{2,\mu}
\end{pmatrix}
\right>+O\left(\left\Vert  \begin{pmatrix}
\omega_{1,\mu}\\
 \omega_{2,\mu}
\end{pmatrix}\right\Vert_{L^{\infty}(\Omega)} \right)\\
=&O\left(\left\Vert  \begin{pmatrix}
\omega_{1,\mu}\\
 \omega_{2,\mu}
\end{pmatrix}\right\Vert_{L^{\infty}(\Omega)} \right).
\end{split}
\end{equation}
Similarly, we have

\begin{equation}\label{eq30}
\int_{\Omega}\left<
\begin{pmatrix}
 \frac{1}{\varepsilon^2}e^{\sum_{j=1}^2(U_{j,\mu}+\omega_{j,\mu}+u_{0,j})}  \\
\frac{1}{\varepsilon^2}e^{\sum_{j=1}^2(U_{j,\mu}+\omega_{j,\mu}+u_{0,j})}
\end{pmatrix}
,\begin{pmatrix}
Y_{1,i,1}\\
Y_{2,i,1}
\end{pmatrix}
\right>=O\left( \varepsilon^2\mu^2+\varepsilon^2\mu^3  \left\Vert  \begin{pmatrix}
\omega_{1,\mu}\\
 \omega_{2,\mu}
\end{pmatrix}\right\Vert_{L^{\infty}(\Omega)}  \right).
\end{equation}

By \eqref{eq26}, \eqref{eq28}, \eqref{eq29} and \eqref{eq30}, we conclude that
\begin{equation}\label{eq27}
\int_{\Omega}\left<
\begin{pmatrix}
\Delta  \omega_{1,\mu}  +\frac{1}{\varepsilon^2}e^{U_{2,\mu}+\omega_{2,\mu}+u_{0,2}}(1-e^{U_{1,\mu}+\omega_{1,\mu}+u_{0,1}})-\frac{1}{\varepsilon^2}e^{U_{2,\mu}+u_{0,2}} \\
\Delta  \omega_{1,\mu}  +\frac{1}{\varepsilon^2}e^{U_{1,\mu}+\omega_{1,\mu}+u_{0,1}}(1-e^{U_{2,\mu}+\omega_{2,\mu}+u_{0,2}})-\frac{1}{\varepsilon^2}e^{U_{1,\mu}+u_{0,1}}
\end{pmatrix}
,\begin{pmatrix}
Y_{1,i,1}\\
Y_{2,i,1}
\end{pmatrix}
\right> =O\left(\frac{\ln \mu}{\mu^{2-\frac{\alpha}{2}}}\right).
\end{equation}

\end{proof}

\begin{lemma}\label{lemma35}  There exists $B>0$, such that
\begin{equation} \label{eq31}
\begin{split}
&\int_{\Omega}\left<
\begin{pmatrix}
\Delta ( U_{1,\mu}+\omega_{1,\mu}))+\frac{1}{\varepsilon^2}e^{U_{2,\mu}+\omega_{2,\mu}+u_{0,2}}(1-e^{U_{1,\mu}+\omega_{1,\mu}+u_{0,1}})-8k\pi\\
\Delta (U_{2,\mu}+\omega_{1,\mu}))+\frac{1}{\varepsilon^2}e^{U_{1,\mu}+\omega_{1,\mu}+u_{0,1}}(1-e^{U_{2,\mu}+\omega_{2,\mu}+u_{0,2}})-8k\pi
\end{pmatrix}
,\begin{pmatrix}
Y_{1,0}\\
Y_{2,0}
\end{pmatrix}
\right>\\
=& \frac{8}{\mu^3\rho_1}\left( \sum_{i=1}^k\rho_i(\int_{\Omega_i\setminus B_{\delta}(x_i)} \frac{ e^{f_{1,i,{\bf x}}}-1}{|y-x_i|^4}-\int_{\mathbb{R}^2\setminus\Omega_i}\frac{1}{|y-x_i|^4}         )\right)\\
&+\frac{8}{\mu^3\rho^*_1}\left( \sum_{i=1}^k\rho^*_i(\int_{\Omega_i\setminus B_{\delta}(x_i)} \frac{e^{f_{2,i,{\bf x}}}-1}{|y-x_i|^4}-\int_{\mathbb{R}^2\setminus\Omega_i}\frac{1}{|y-x_i|^4}         )\right)\\
& +B\varepsilon^2\mu+O(\frac{\delta^2}{\mu^3})+O(\frac{\ln\mu}{\mu^{4-\frac{\alpha}{2}}})
\end{split}
\end{equation}

\end{lemma}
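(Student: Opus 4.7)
The approach parallels the proof of Lemma \ref{lemma34}, but now the test functions $(Y_{1,0},Y_{2,0})$ correspond to the dilation direction in $\mu$, so the leading-order cancellation is more delicate and the residue of the pairing should recover the global quantity $\mathcal{D}^{(2)}({\bf x})$ up to the explicit correction $B\varepsilon^2\mu$.

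I would first linearize the nonlinearity around $(U_{1,\mu},U_{2,\mu})$ via \eqref{eq10}, writing the integrand in \eqref{eq31} as the sum of (i) the zeroth-order residual $R_\mu$ obtained by setting $\omega_{i,\mu}=0$ and (ii) a perturbation that is linear and quadratic in $\omega_{i,\mu}$. Using the estimate $\|(\omega_{1,\mu},\omega_{2,\mu})\|_{L^\infty\cap X_{\alpha,\mu,2}}\leq C\ln\mu/\mu^{2-\alpha/2}$ from \eqref{eq26}, integration by parts (which moves $\Delta$ onto $(Y_{1,0},Y_{2,0})$), together with the elementary bound $L_\mu(Y_{1,0},Y_{2,0})=O(\mu^{-3})$ and the doubly periodic structure of both factors, reduces the contribution of (ii) to $O(\ln\mu/\mu^{4-\alpha/2})$, exactly as in the last step of Lemma \ref{lemma34}.

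For the main term (i), I would invoke \eqref{eq05}--\eqref{eq06} to rewrite $R_\mu$ as $-\sum_{i=1}^k\bigl(1_{B_{d_i}(x_i)}e^{V_{x_i,\mu_i}}(1-e^{f_{j,i,{\bf x},\mu}})+8\pi/\theta_i\bigr)$ plus the explicit error terms displayed there. Paired with $Y_{l,0}$, the pieces $8\pi/\theta_i=8\pi/(1+\mu_i^2 d_i^2)$ produce, after using $\rho_i\mu_i^2=\rho_1\mu_1^2$ and $\varepsilon\mu^2=O(1)$, a constant multiple of $\varepsilon^2\mu$, which I identify as $B\varepsilon^2\mu$. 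For the bubble-type pieces $e^{V_{x_i,\mu_i}}(1-e^{f_{j,i,{\bf x},\mu}})$ I would split the integration over $B_{d_i}(x_i)$ into an inner ball $B_\delta(x_i)$ and the complementary annulus. On $B_\delta(x_i)$, Taylor-expanding $f_{j,i,{\bf x},\mu}$ and exploiting the parity of the local profile of $Y_{l,0}$ near $x_i$ yields an $O(\delta^2/\mu^3)$ error. On the annulus, replacing $e^{V_{x_i,\mu_i}}$ by its far-field asymptotic $8\mu_i^{-2}|y-x_i|^{-4}$ and $Y_{l,0}$ by its leading constant $-1/\mu_1$ produces the integral $\frac{8\rho_i}{\mu^3\rho_1}\int_{\Omega_i\setminus B_\delta(x_i)}(e^{f_{j,i,{\bf x}}}-1)/|y-x_i|^4$ via $\mu_i^2/\mu_1^2=\rho_1/\rho_i$. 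The $\mathbb{R}^2\setminus\Omega_i$ subtractions appearing in \eqref{eq31} arise from using the exact identity $\int_{\mathbb{R}^2}e^{V_{x_i,\mu_i}}=8\pi$ to express the tail beyond $\Omega_i$ as a convergent principal value that renders the $\delta\to 0$ limit finite.

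The step I expect to be the main obstacle is the matching: ensuring that the sum of the inner-ball contributions, the $\mathbb{R}^2\setminus\Omega_i$ renormalisation integrals, and the cross-bubble corrections at far-away blow-up points $x_j$ ($j\neq i$) combine cleanly so that the $\delta$-limit reproduces exactly the quantity in \eqref{e052} (with ${\bf q}$ replaced by ${\bf x}$). This requires careful tracking of the prefactors $\sqrt{\rho_1/\rho_i}/\mu_i$ in the definition of $Y_{i,0}$ and invoking the consequence $\rho_i^*/\rho_1^* = \rho_i/\rho_1 + O(1/\mu)$ of assumption (A.1), which is precisely what allows both the $\rho_i/\rho_1$ and $\rho_i^*/\rho_1^*$ sums to appear in the final formula with the correct matching coefficients; the $O(1/\mu)$ discrepancy between these ratios is absorbed into the $O(\ln\mu/\mu^{4-\alpha/2})$ term.
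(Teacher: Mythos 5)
Your overall plan---linearize via \eqref{eq10}, bound the $(\omega_{1,\mu},\omega_{2,\mu})$-dependent remainder by $O(\ln\mu/\mu^{4-\alpha/2})$ using \eqref{eq26} and integration by parts, and then evaluate the zeroth-order residual from \eqref{eq05}--\eqref{eq06} by splitting each $B_{d_i}(x_i)$ into $B_\delta(x_i)$ and the annulus---is indeed the paper's strategy, and your treatment of the inner ball (Taylor expansion yielding $O(\delta^2/\mu^3)$) and of the annulus (far-field replacement $e^{V_{x_i,\mu_i}}\approx 8\mu_i^{-2}|y-x_i|^{-4}$, $Y_{l,0}\approx-1/\mu_1$, with the $\mu_i^2/\mu_1^2=\rho_1/\rho_i$ conversion) is correct. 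However, you misattribute two of the terms in the final answer, and this is not a cosmetic slip: it would produce the wrong identity.

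First, the $B\varepsilon^2\mu$ term does \emph{not} come from the $8\pi/\theta_i$ pieces. Those pieces, paired with $Y_{l,0}\approx-1/\mu_1$, give $-\frac{16\pi}{\mu}\sum_i\theta_i^{-1}$, and since $\theta_i^{-1}=(1+\mu_i^2 d_i^2)^{-1}$ matches, to leading order, $\int_{\mathbb{R}^2\setminus B_{d_i}(x_i)}|y-x_i|^{-4}\,dy$ (after accounting for the $\mu_i^{-2}\mu_1^{-1}=\rho_i/(\rho_1\mu^3)$ prefactor), these terms supply precisely the $\int_{\mathbb{R}^2\setminus\Omega_i}|y-x_i|^{-4}$ subtraction once combined with the $\Omega\setminus\Omega'$ contributions of $\frac{1}{\varepsilon^2}e^{U_{j,\mu}+u_{0,j}}$ coming from the expansion \eqref{a07}. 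That outer contribution---which you never account for---is essential: via the identities $e^{u_{0,l}(y)+8\pi\sum_i G(y,x_i)}=\rho_i|y-x_i|^{-4}e^{f_{l,i,\mathbf{x}}(y)}$ (or $\rho^*_i$ for $l=2$), it produces the $\int_{\Omega_i\setminus B_{d_i}}e^{f_{l,i,\mathbf{x}}}|y-x_i|^{-4}$ pieces that join the annulus integral $\int_{B_{d_i}\setminus B_\delta}(e^{f_{l,i,\mathbf{x}}}-1)|y-x_i|^{-4}$ into the desired $\int_{\Omega_i\setminus B_\delta}(e^{f_{l,i,\mathbf{x}}}-1)|y-x_i|^{-4}-\int_{\mathbb{R}^2\setminus\Omega_i}|y-x_i|^{-4}$. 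Your alternative explanation via the flux identity $\int_{\mathbb{R}^2}e^{V_{x_i,\mu_i}}=8\pi$ is not the operative mechanism here. Second, the actual source of $B\varepsilon^2\mu$ is the cross term $-\frac{1}{\varepsilon^2}e^{U_{1,\mu}+u_{0,1}+U_{2,\mu}+u_{0,2}}$ in the linearization \eqref{eq10}, which, once expanded via \eqref{a08} and paired against $(Y_{1,0},Y_{2,0})$, yields a positive constant times $\varepsilon^2\mu$ plus a lower-order remainder. This term is entirely absent from your accounting. So while your decomposition and error bounds are sound, the bookkeeping of the two $O(1/\mu^3)$-order contributions is crossed, and if carried through literally the proposal would either double-count the $\theta_i$-dependent terms or omit the genuine cross-term contribution, and in either case fail to reproduce \eqref{eq31}.
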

\begin{proof}
As in the proof of Lemma \eqref{lemma34}, we firstly calculate

\begin{equation}\label{eq34}
\begin{split}
&\int_{\Omega}\left<
\begin{pmatrix}
\Delta ( U_{1,\mu}+\omega_{1,\mu}))+\frac{1}{\varepsilon^2}e^{U_{2,\mu}+\omega_{2,\mu}+u_{0,2}}(1-e^{U_{1,\mu}+\omega_{1,\mu}+u_{0,1}})-8k\pi\\
\Delta (U_{2,\mu}+\omega_{1,\mu}))+\frac{1}{\varepsilon^2}e^{U_{1,\mu}+\omega_{1,\mu}+u_{0,1}}(1-e^{U_{2,\mu}+\omega_{2,\mu}+u_{0,2}})-8k\pi
\end{pmatrix}
,\begin{pmatrix}
Y_{1,0}\\
Y_{2,0}
\end{pmatrix}
\right>\\
=& \sum_{i=1}^k \int_{B_{d_i(x_i)}} e^{V_{x_i,\mu_i}}(e^{f_{2,i,{\bf x},\mu}}-1)Y_{1,0}+   e^{V_{x_i,\mu_i}}(e^{f_{1,i,{\bf x},\mu}})Y_{2,0} \\
+&\int_{\Omega}\frac{8\pi}{\theta_i}(Y_{1,0}+Y_{2,0})+\frac{\int_{\Omega \setminus  \Omega^{'}}e^{\omega_{\mu}^*+u_{0,2}}Y_{1,0}}{\int_{\Omega}e^{\omega_{\mu}^*+u_{0,2}}}+\frac{\int_{\Omega \setminus  \Omega^{'}}e^{\omega_{\mu}^*+u_{0,1}}Y_{2,0}}{\int_{\Omega}e^{\omega_{\mu}^*+u_{0,1}}}\\
\end{split}
\end{equation}
For $i=1,\cdots,n$, $j=1,2$, we write
\begin{equation}
e^{f_{j,i,{\bf x},\mu}(y)}-1 =e^{f_{j,i,\textbf{x}}(y)}-1+e^{f_{j,i,{\bf x},\mu}(y)}-e^{f_{j,i,\textbf{x}}(y)}
\end{equation}
Then
\begin{equation}\label{a13}
\begin{split}
&\int_{B_{d_i(x_i)}}\left(e^{V_{x_i,\mu_i}}(e^{f_{2,i,{\bf x},\mu}(y)}-1)Y_{1,0}+e^{V_{x_i,\mu_i}}(e^{f_{1,i,{\bf x},\mu}(y)}-1)Y_{2,0}\right)\\
=&\int_{B_{d_i(x_i)}}\left(e^{V_{x_i,\mu_i}}(e^{f_{2,i,\textbf{x}}}-1)Y_{1,0}+e^{V_{x_i,\mu_i}}(e^{f_{1,i,\textbf{x}}}-1)Y_{2,0}\right)+O(\frac{1}{\mu^4})\\
\end{split}
\end{equation}
A straifhtforward calculation shows that
\begin{equation}\label{a14}
\sum_{i=1}^k \int_{\Omega} \frac{8\pi}{\theta_i} (Y_{1,0}+Y_{2,0}) = -\frac{16\pi}{\mu} \sum_{i=1}^k \frac{1}{\theta_i}+O(\frac{\ln\mu}{\mu^5}).
\end{equation}
and
\begin{equation}\label{a15}
\begin{split}
&\frac{\int_{\Omega\setminus\Omega^{'}} e^{\omega_{\mu}+u_{0,2}} Y_{1,0}}{\int_{\Omega} e^{\omega_{\mu}^*+u_{0,2}}}+\frac{\int_{\Omega\setminus\Omega^{'}} e^{\omega_{\mu}^*+u_{0,1}} Y_{2,0}}{\int_{\Omega} e^{\omega_{\mu}^*+u_{0,1}}}
\\
=&-\frac{8}{\rho_1^* \mu^2}\left(1+O(\frac{1}{\mu})\right) \int_{\Omega\setminus\Omega^{'}} e^{u_{0,2}+8\pi \sum_{i=1}^k G(y,x_i)} \left(-\frac{1}{\mu}+O(\frac{1}{\mu^3}) \right)\\
&-\frac{8}{\rho_1 \mu^2}\left(1+O(\frac{1}{\mu^2})\right) \int_{\Omega\setminus\Omega^{'}} e^{u_{0,1}+8\pi \sum_{i=1}^k G(y,x_i)} \left(-\frac{1}{\mu}+O(\frac{1}{\mu^3}) \right)\\
=&-\frac{8}{\rho_1^* \mu^3} \int_{\Omega\setminus\Omega^{'}} e^{u_{0,2}+8\pi \sum_{i=1}^k G(y,x_i)} -\frac{8}{\rho_1 \mu^3} \int_{\Omega\setminus\Omega^{'}} e^{u_{0,1}+8\pi \sum_{i=1}^k G(y,x_i)}+O(\frac{1}{\mu^4})\\
\end{split}
\end{equation}
Fix a small positive constant $\delta$ with $\delta\ll d_i,\ i=1,\cdots,k.$
Note that $\Delta e^{f_{i,j,{\bf x}}} = e^{f_{i,j,{\bf x}}}\vert Df_{i,j,{\bf x}} \vert,\ i=1,\cdots,k,\ j=1,2$. By the symmetry and Taylor expansion on $\left(e^{f_{i,j,{\bf x}}}-1\right)$, we obtain
\begin{equation}\label{a16}
\begin{split}
&\int_{B_{\delta}(x_i)} e^{V_{x_i,\mu_i}} (e^{f_{2,i,\textbf{x}}}-1) Y_{1,0}+\int_{B_{\delta}(x_i)} e^{V_{x_i,\mu_i}} (e^{f_{1,i,\textbf{x}}}-1) Y_{2,0}\\
=&O\left(\frac{1}{\mu} \int_{B_{\delta}(x_i)} e^{V_{x_i,\mu_i}} \Big( |Df_{2,i,\textbf{x}}(x_i) |^2 |y-x_i|^2 +  |Df_{1,i,\textbf{x}}(x_i) |^2 |y-x_i|^2 + |y-x_i|^4      \Big) \right)\\
=&O\left(\frac{\delta^2}{\mu^3} + \frac{\ln\mu}{\mu^5} \right)\\
\end{split}
\end{equation}
By this, we find that
\begin{equation}\label{a17}
\begin{split}
&\int_{B_{\delta}(x_i)} e^{V_{x_i,\mu_i}} (e^{f_{2,i,\textbf{x}}}-1) Y_{1,0}+\int_{B_{\delta}(x_i)} e^{V_{x_i,\mu_i}} (e^{f_{1,i,\textbf{x}}}-1) Y_{2,0}\\
=&\int_{B_{d_i}(x_i) \setminus B_{\delta}(x_i)} e^{V_{x_i,\mu_i}} (e^{f_{2,i,\textbf{x}}}-1) Y_{1,0}+\int_{B_{d_i}(x_i) \setminus B_{\delta}(x_i)} e^{V_{x_i,\mu_i}} (e^{f_{1,i,\textbf{x}}}-1) Y_{2,0}+O\left(\frac{\delta^2}{\mu^3} + \frac{ln\mu}{\mu^5} \right)\\
=&-\frac{8}{\mu^3} \int_{B_{d_i}(x_i) \setminus B_{\delta}(x_i)} \frac{e^{f_{2,i,\textbf{x}}}-1}{|y-x_i|^4} -\frac{8}{\mu^3} \int_{B_{d_i}(x_i) \setminus B_{\delta}(x_i)} \frac{e^{f_{1,i,\textbf{x}}}-1}{|y-x_i|^4} + O\left(\frac{\delta^2}{\mu^3} + \frac{ln\mu}{\mu^5} \right)\\
\end{split}
\end{equation}
For $i=1,\cdots,k,$ we have
\begin{equation}\label{a18}
e^{u_{0,1}(y) + 8\pi\sum_{i=1}^k G(y,x_i)} = \frac{\rho_i}{|y-x_i|^4} e^{f_{1,i,\textbf{x}}(y)}
\end{equation}
and
\begin{equation}\label{a19}
e^{u_{0,2}(y) + 8\pi\sum_{i=1}^k G(y,x_i)} = \frac{\rho_i^*}{|y-x_i|^4} e^{f_{2,i,\textbf{x}}(y)}
\end{equation}
Recall that $\frac{1}{\theta_i} = \frac{1}{1+(\mu_id_i)^2}$. So, we have
\begin{equation}\label{a20}
\frac{1}{\theta_i} = \int_{ \mathbb{R}^2\setminus B_{d_i}(x_i)} \frac{1}{|y-x_i|^4}dy + O(\frac{1}{\mu^2})
\end{equation}
Combining \eqref{a14} \eqref{a15} \eqref{a16} \eqref{a17} \eqref{a18} \eqref{a19} and \eqref{a20}, we are led to
\begin{equation}
\begin{split}
&\int_{\Omega}\left<
\begin{pmatrix}
\Delta U_{1,\mu}+\frac{1}{\varepsilon^2}e^{U_{2,\mu}+u_{0,2}}\\
\Delta U_{2,\mu}+\frac{1}{\varepsilon^2}e^{U_{1,\mu}+u_{0,1}}
\end{pmatrix}
,\begin{pmatrix}
Y_{1,0}\\
Y_{2,0}
\end{pmatrix}
\right>\\
=&-\frac{8}{\mu^3\rho_1^*} \left( \sum_{i=1}^k \rho_i^* \left( \int_{\Omega_i \setminus B_{\delta}(x_i)}  \frac{e^{f_{2,i,\textbf{x}}}-1}{|y-x_i|^4} - \int_{\mathbb{R}^2 \setminus \Omega_i} \frac{1}{|y-x_i|^4}    \right)     \right)\\
&-\frac{8}{\mu^3\rho_1} \left( \sum_{i=1}^k \rho_i \left( \int_{\Omega_i \setminus B_{\delta}(x_i)}  \frac{e^{f_{1,i,\textbf{x}}}-1}{|y-x_i|^4} - \int_{\mathbb{R}^2 \setminus \Omega_i} \frac{1}{|y-x_i|^4}    \right)     \right)\\
&+O\left( \frac{\delta^2}{\mu^3} + \frac{1}{\mu^4} \right)\\
\end{split}
\end{equation}
By the argument in Lemma \ref{lemma34},
\begin{equation}
\int_{\Omega} \left<  \begin{pmatrix}
 -\frac{1}{\varepsilon^2}e^{U_{1,\mu}+u_{0,1}+U_{2,\mu}+u_{0,2}} \\
 -\frac{1}{\varepsilon^2}e^{U_{1,\mu}+u_{0,1}+U_{2,\mu}+u_{0,2}}
\end{pmatrix}
\begin{pmatrix}
Y_{1,0}\\
Y_{2,0}
\end{pmatrix}
\right> =  B\varepsilon^2\mu+O(\frac{1}{\mu^5}),
\end{equation}
for some $B>0$, and

\begin{equation}
\int_{\Omega} \left<  \begin{pmatrix}
 \Delta \omega_{1,\mu}+\frac{1}{\varepsilon^2}R_{1,\mu}\\
 \Delta \omega_{2,\mu}+\frac{1}{\varepsilon^2}R_{2,\mu}
\end{pmatrix},
\begin{pmatrix}
Y_{1,0}\\
Y_{2,0}
\end{pmatrix}
\right>  =  O\left(\frac{\ln \mu}{\mu^{4-\frac{\alpha}{2}}}\right),
\end{equation}
where

$$
R_{1,\mu}=e^{U_{2,\mu}+u_{0,2}+\omega_{2,\mu}}(1-e^{U_{1,\mu}+u_{0,1}+\omega_{1,\mu}})-
e^{U_{2,\mu}+u_{0,2} }(1-e^{U_{1,\mu}+u_{0,1} })
$$
and
$$
R_{2,\mu}=e^{U_{1,\mu}+u_{0,1}+\omega_{1,\mu}}(1-e^{U_{2,\mu}+u_{0,2}+\omega_{2,\mu}})-
e^{U_{1,\mu}+u_{0,1} }(1-e^{U_{2,\mu}+u_{0,2} }).
$$
\end{proof}
{\bf Proof of Theorem \ref{main-thm}}
From \eqref{eq25} and \eqref{eq31}, we observe that \eqref{eq07} and \eqref{eq08} are equivalent to
\begin{equation}\label{eq53}
 DG_i^* ({\bf x})=O\left(\frac{\ln \mu}{\mu^{2-\frac{\alpha}{2}}}\right),\, i=1,2,
\end{equation}
and

\begin{equation}\label{eq54}
\begin{split}
&\frac{8}{\mu^3\rho_1}\left( \sum_{i=1}^k\rho_i(\int_{\Omega_i\setminus B_{\delta}(x_i)} \frac{e^{f_{1,i}}-1}{|y-x_i|^4}-\int_{\mathbb{R}^2\setminus\Omega_i}\frac{1}{|y-x_i|^4}         )\right)\\
&+\frac{8}{\mu^3\rho^*_1}\left( \sum_{i=1}^k\rho^*_i(\int_{\Omega_i\setminus B_{\delta}(x_i)} \frac{e^{f_{2,i}}-1}{|y-x_i|^4}-\int_{\mathbb{R}^2\setminus\Omega_i}\frac{1}{|y-x_i|^4}         )\right)
 +B\varepsilon^2\mu\\
=&\frac{1}{\mu^3}O\left( (|DG_1^*({\bf x})|^2+|DG_2^*({\bf  x})|^2)\ln\mu+\delta^2             \right)+O(\frac{1}{\mu^5})
\end{split}
\end{equation}

Since we assume  $\mathcal{D}^{(2)}({\bf q })<0$, for small $\delta>0$, there exists ${\bf x}$ close to ${\bf q}$
such that
\begin{equation}
\begin{split}
&  \sum_{i=1}^k\left( \frac{\rho_i}{\rho_1}(\int_{\Omega_i\setminus B_{\delta}(x_i)} \frac{e^{f_{1,i}}-1}{|y-x_i|^4}-\int_{\mathbb{R}^2\setminus\Omega_i}\frac{1}{|y-x_i|^4}         )
 +   \frac{\rho^*_i}{\rho^*_1}(\int_{\Omega_i\setminus B_{\delta}(x_i)} \frac{e^{f_{2,i}}-1}{|y-x_i|^4}-\int_{\mathbb{R}^2\setminus\Omega_i}\frac{1}{|y-x_i|^4} )\right)   \\
&+O(\delta^2)<0 .
\end{split}
\end{equation}
From this and  (A.2), we find that \eqref{eq53} and \eqref{eq54} have a solution
${\bf x_{\varepsilon}}$ and $(\mu_{1,\varepsilon},\cdots, \mu_{k,\varepsilon})$ satisfies
\begin{equation}
|DG_1^*({\bf x})|+|DG_2^*({\bf x})|\leq C \frac{\ln \mu}{\mu^{2-\frac{\alpha}{2}}},\,\, \mu_{i,\varepsilon}\in (\frac{\beta_0}{\sqrt{\varepsilon}},\frac{\beta_1}{\sqrt{\varepsilon}} ),\,\,i=1,\cdots,k.
\end{equation}

\section{Appendix}\label{Inv-sec}
In this section, we will discuss the invertibility of the linear operator $L_{\mu}$.  From our construction of the approximation solutions, we can  split  the associated  linear operator $L_{\mu}$  into two parts $\mathcal{L}_1$ and $\mathcal{L}_2$(see \eqref{eq58} below).  Apply  Theorem A.2 in \cite{LY2} and   Theorem B.1 in \cite{LY} to  $\mathcal{L}_1$ and $\mathcal{L}_2$ respectively,  we obtain  the invertibility of   $L_{\mu}$.

\begin{theorem}\label{inv-thm}
\begin{itemize}

\item[(1)] The operator $Q_{\mu}L_{\mu}$ is an isomorphism from $E_{\mu,2}$
to $F_{\mu,2}$. \\
\item[(2)] If  $\begin{pmatrix}\omega_{1}\\\omega_{2}  \end{pmatrix}\in E_{\mu,2}$ and
$\begin{pmatrix}h_{1}\\h_{2}  \end{pmatrix} \in F_{\mu,2}$  satisfies
\begin{equation}\label{eq50}
 L_{\mu}\begin{pmatrix}\omega_{1 }\\\omega_{2 }  \end{pmatrix}
 =\begin{pmatrix}h_{1 }\\h_{2 }  \end{pmatrix} .
\end{equation}
  Then there exits a constant $C>0$, independent of $\bf{x}$ and $\mu$,
  such that
\begin{equation}\label{eq40}
\left\Vert\begin{pmatrix}\omega_{1 }\\\omega_{2 }  \end{pmatrix}\right\Vert_{L^{\infty}(\Omega)}+\left\Vert\begin{pmatrix}\omega_{1 }\\\omega_{2 }  \end{pmatrix}\right\Vert_{X_{\alpha,\mu,2}}\leq C \ln \mu \left\Vert\begin{pmatrix}h_{1 }\\h_{2 }  \end{pmatrix} \right\Vert_{Y_{\alpha,\mu,2}}.
\end{equation}

\end{itemize}
\end{theorem}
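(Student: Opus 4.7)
The plan is to decouple the $2\times 2$ system underlying $L_\mu$ by passing to symmetric and antisymmetric combinations. Writing $w_\pm = \omega_1 \pm \omega_2$ and $h_\pm = h_1 \pm h_2$, and setting
$$
V_\mu(y) := \sum_{i=1}^k \mathbf{1}_{B_{d_i}(x_i)}(y)\, e^{V_{x_i,\mu_i}(y)},
$$
equation \eqref{eq50} is equivalent to the pair of uncoupled scalar problems
$$
\mathcal{L}_1 w_+ := \Delta w_+ + V_\mu\, w_+ = h_+, \qquad \mathcal{L}_2 w_- := \Delta w_- - V_\mu\, w_- = h_-.
$$
Here $\mathcal{L}_1$ is the standard linearized multi-bubble mean field operator and $\mathcal{L}_2$ is a Schr\"odinger operator with non-positive spectrum. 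Crucially, because $Y_{1,0}\equiv Y_{2,0}$ and $Y_{1,i,j}\equiv Y_{2,i,j}$ by construction, the entire approximate kernel of $L_\mu$ lies in the symmetric sector; accordingly, the constraints defining $E_{\mu,2}$ and $F_{\mu,2}$ transform under $(\omega_1,\omega_2)\mapsto(w_+,w_-)$ into the standard orthogonality conditions on $w_+$ alone (against $\{Y_{1,0},Y_{1,i,j}: i=1,2,\,j=1,\dots,k\}$, through the corresponding $Z$'s), while $w_-$ remains unrestricted.

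With the decomposition in hand, I would invoke the two cited results componentwise. For $\mathcal{L}_1$, Theorem A.2 of \cite{LY2} provides an isomorphism from the subspace of $X_{\alpha,\mu}$ orthogonal to the approximate kernel onto the corresponding subspace of $Y_{\alpha,\mu}$, together with the estimate
$$
\|w_+\|_{L^\infty(\Omega)}+\|w_+\|_{X_{\alpha,\mu}}\le C\ln\mu\,\|h_+\|_{Y_{\alpha,\mu}}
$$
after a scalar projector $Q_\mu^+$ removes the kernel. For $\mathcal{L}_2$ the potential $V_\mu\ge 0$ renders $-\mathcal{L}_2=-\Delta+V_\mu$ strictly coercive, so Theorem B.1 of \cite{LY} gives that $\mathcal{L}_2$ is an isomorphism from $X_{\alpha,\mu}$ onto $Y_{\alpha,\mu}$ with no approximate kernel to quotient out, yielding
$$
\|w_-\|_{L^\infty(\Omega)}+\|w_-\|_{X_{\alpha,\mu}}\le C\,\|h_-\|_{Y_{\alpha,\mu}}.
$$

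Reassembling $\omega_1=(w_++w_-)/2$ and $\omega_2=(w_+-w_-)/2$, and noting the trivial bound $\|h_\pm\|_{Y_{\alpha,\mu}}\le C\,\|(h_1,h_2)^T\|_{Y_{\alpha,\mu,2}}$, at once delivers \eqref{eq40}. For part (1), the splitting shows that $Q_\mu L_\mu|_{E_{\mu,2}}$ corresponds, via the $w_\pm$ change of variables, to the direct sum $(Q_\mu^+\mathcal{L}_1)\oplus\mathcal{L}_2$; since each summand is already an isomorphism by the above, so is $Q_\mu L_\mu$, while its injectivity is encoded in \eqref{eq40}.

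The only genuine difficulty is a bookkeeping one: one must verify that the pairing constraints in the definitions of $E_{\mu,2}$ and $F_{\mu,2}$ split cleanly under $(\omega_1,\omega_2)\mapsto(w_+,w_-)$ (which follows from $Z_{1,\ast}\equiv Z_{2,\ast}$ owing to $Y_{1,\ast}\equiv Y_{2,\ast}$), and that the product norms $\|\cdot\|_{X_{\alpha,\mu,2}}$, $\|\cdot\|_{Y_{\alpha,\mu,2}}$ are equivalent to the sums of the scalar norms $\|\cdot\|_{X_{\alpha,\mu}}$, $\|\cdot\|_{Y_{\alpha,\mu}}$ of $w_+$ and $w_-$. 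Once this identification is set up, all the PDE analysis is delegated to \cite{LY2} and \cite{LY}, and no new linear estimates are required.
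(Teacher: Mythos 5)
Your proposal matches the paper's own proof: both diagonalize $L_\mu$ via $w_\pm=\omega_1\pm\omega_2$, observe that the approximate kernel sits entirely in the symmetric sector because $Y_{1,\ast}\equiv Y_{2,\ast}$ and $Z_{1,\ast}\equiv Z_{2,\ast}$, and then invoke Theorem A.2 of \cite{LY2} for $\mathcal{L}_1$ (with the projector $Q_\mu$) and Theorem B.1 of \cite{LY} for $\mathcal{L}_2$ before reassembling. The only cosmetic deviation is that you drop the $\ln\mu$ factor in the $\mathcal{L}_2$ bound, whereas the paper keeps it for uniformity; this makes no difference to \eqref{eq40}.
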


\begin{proof}
Since $Z_{1,0}=Z_{2,0}$, $Z_{1,i,j}=Z_{2,i,j}$, $Y_{1,0}=Y_{2,0}$, and $Y_{1,i,j}=Y_{2,i,j}$, $i=1,2$, $j=1,\cdots,k.$
We denote
$$E_{\mu }=\left\{ \omega \in X_{\alpha,\mu }: \int_{\Omega}
\omega
Z_{1,0}
 = \int_{\Omega}
\omega
Z_{1,i,j} =0,\, i=1,2,\, j=1,\cdots, k.    \right\}
$$

and
$$F_{\mu }=\left\{
\omega \in Y_{\alpha,\mu }  : \int_{\Omega}
\omega
Y_{1,0} = \int_{\Omega}
\omega_1
Y_{1,i,j} =0,\, i=1,2,\, j=1,\cdots, k.    \right\}.
$$
We use the same notation for the project operator $Q_{\mu}:Y_{\mu}\to F_{\mu}$.

We can   rewrite  \eqref{eq50}  we as
\begin{equation}\label{eq58}
\begin{split}
\mathcal{L}_1( \omega_{1}+\omega_{2} ):=&\Delta(\omega_1+\omega_2)+ \sum_{i=1}^k1_{B_{d_i}(x_i)} e^{V_{x_i,\mu_i}}(\omega_{1}+\omega_{2})=h_1+h_2\\
\mathcal{L}_2( \omega_{1}-\omega_{2} ):=&\Delta(\omega_1-\omega_2) - \sum_{i=1}^k1_{B_{d_i}(x_i)} e^{V_{x_i,\mu_i}}(\omega_{1}-\omega_{2})=h_1-h_2
\end{split}
\end{equation}

By  Theorem A.2 in \cite{LY2},  $Q_{\mu}\mathcal{L}_1$ is an isomorphism from $E_{\mu}$ to $F_{\mu}$; by    Theorem B.1 in \cite{LY},  $\mathcal{L}_2$ is an isomorphism from $X_{\alpha,\mu}$ to $Y_{\alpha,\mu}$.  Furthermore,
by Theorem A.2 in \cite{LY2}  and   Theorem B.1 in \cite{LY} again,     we obtain
\begin{equation}
\left\Vert \omega_{1 }+\omega_2 \right\Vert_{L^{\infty}(\Omega)}+\left\Vert\omega_{1 }+\omega_2 \right\Vert_{X_{\alpha,\mu}}\leq C \ln \mu \left\Vert h_{1 }+h_{2 }   \right\Vert_{Y_{\alpha,\mu }}
\end{equation}
and
\begin{equation}
\left\Vert \omega_{1 }-\omega_2 \right\Vert_{L^{\infty}(\Omega)}+\left\Vert\omega_{1 }-\omega_2 \right\Vert_{X_{\alpha,\mu}}\leq C \ln \mu \left\Vert h_{1 }-h_{2}   \right\Vert_{Y_{\alpha,\mu }}
\end{equation}
which imply \eqref{eq40}.

\end{proof}

\bibliography{nsf}
\bibliographystyle{plain}

\end{document}